\newcommand{\veps}{\varepsilon}
\newcommand{\md}{\mathrm{d}}
\newcommand{\R}{\mathbb{R}}
\newcommand{\Rmnum}[1]{\uppercase\expandafter{\romannumeral#1}} 
\newcommand{\bbZ}{\mathbb{Z}}
\newcommand{\calE}{\mathcal{E}}
\newcommand{\calF}{\mathcal{F}}
\newcommand{\calG}{\mathcal{G}}
\newcommand{\calH}{\mathcal{H}}
\newcommand{\calP}{\mathcal{P}}
\newcommand{\rmC}{\mathrm{C}}
\newcommand{\rmH}{\mathrm{H}}
\newcommand{\rmT}{\mathrm{T}}
\newcommand{\rmdim}{\mathrm{dim}}
\newcommand{\myset}[1]{\left\{#1\right\}}
\newcommand{\mynorm}[1]{\lVert#1\rVert}
\newcommand{\mytilde}[1]{\widetilde{#1}}
\def\Xint#1{\mathchoice
{\XXint\displaystyle\textstyle{#1}}%
{\XXint\textstyle\scriptstyle{#1}}%
{\XXint\scriptstyle\scriptscriptstyle{#1}}%
{\XXint\scriptscriptstyle\scriptscriptstyle{#1}}%
\!\int}
\def\XXint#1#2#3{{\setbox0=\hbox{$#1{#2#3}{\int}$ }
\vcenter{\hbox{$#2#3$ }}\kern-.6\wd0}}
\def\dashint{\Xint-}
\newtheorem{mythm}{Theorem}[section]
\newtheorem{myprop}[mythm]{Proposition}
\newtheorem{mylem}[mythm]{Lemma}
\newtheorem{myrmk}[mythm]{Remark}
\newtheorem{myrmks}[mythm]{Remarks}
\newtheorem{mydef}[mythm]{Definition}
\renewcommand{\thefootnote}{}
\begin{document}

\title{{\large{Korevaar-Schoen spaces on Sierpi\'nski carpets}}}
\author{Meng Yang}
\date{}

\maketitle

\abstract{We prove that certain $L^p$-regularity functional inequality holds on \emph{generalized Sierpi\'nski carpets}. This gives an affirmative answer to an open question raised by Fabrice Baudoin. Our technique originates from an old idea of Alf Jonsson in 1996.}

\footnote{\textsl{Date}: \today}
\footnote{\textsl{MSC2010}: 28A80}
\footnote{\textsl{Keywords}: Korevaar-Schoen spaces, generalized Sierpi\'nski carpets, $p$-energy.}
\footnote{The author was very grateful to Jin Gao, Zhenyu Yu and Junda Zhang for introducing their paper \cite{GYZ23}. He was also very grateful to Fabrice Baudoin, Mathav Murugan and Ryosuke Shimizu for helpful discussions. Part of the work was carried out when the author was attending the workshop \emph{Potential theory and random walks in metric spaces} held in Okinawa, Japan. He acknowledged the organizers for their invitation.}

\renewcommand{\thefootnote}{\arabic{footnote}}
\setcounter{footnote}{0}

\section{Introduction}

As generalizations of the standard Sobolev spaces $W^{1,p}(\R^d)$ with $p$-energy $\int_{\R^d}|\nabla f(x)|^p\md x$, some developments were recently given about $p$-energy on fractal spaces and metric measure spaces, in particular, the spaces where upper gradients are not available. Certain $L^p$-regularity functional inequalities, such as the condition $\calP(p,\delta)$ which is given as follows, play an important role.

Let $(K,d,\mu)$ be a doubling metric measure space. For $p\ge1$, $\delta>0$, $f\in L^p(K;\mu)$, $r>0$, define
$$E_{p,\delta}(f,r)=\frac{1}{r^{p\delta}}\int_K\left(\dashint_{B(x,r)}|f(x)-f(y)|^p\md\mu(y)\right)\md\mu(x),$$
where $\dashint_A=\frac{1}{\mu(A)}\int_A$ for a Borel measurable set $A$ with $\mu(A)\in(0,+\infty)$. The Korevaar-Schoen space $\mathcal{KS}^{p,\delta}(K)$ is defined as
$$\mathcal{KS}^{p,\delta}(K)=\myset{f\in L^p(K;\mu):\varlimsup_{r\downarrow0}E_{p,\delta}(f,r)<+\infty}.$$
We say that the condition $\calP(p,\delta)$ holds if there exists some positive constant $C$ such that for any $f\in \mathcal{KS}^{p,\delta}(K)$
$$\sup_{r>0}E_{p,\delta}(f,r)\le C\varliminf_{r\downarrow0}E_{p,\delta}(f,r).$$

We call the type of the condition, ``$\sup\ldots\lesssim\varliminf\ldots$", that the \emph{global} supremum term can be bounded by the \emph{local} limit infimum term, the \emph{weak monotonicity type condition} (following \cite{GYZ23}). The condition $\calP(p,\delta)$ was introduced in \cite[Definition 4.5]{Bau22a}. Another weak monotonicity type condition, denoted as $(P_{p,\theta})$, which involves the heat kernel and was introduced in \cite[Definition 6.7]{ABCRST1}, is as follows. There exists some positive constant $C$ such that for any $f$ in the Besov type space $\mathbf{B}^{p,\theta}(K)$
\begin{align*}
&\sup_{t>0}\frac{1}{t^{p\theta}}\int_K\int_K|f(x)-f(y)|^pp_t(x,y)\md\mu(y)\md\mu(x)\\
&\le C\varliminf_{t\downarrow0}\frac{1}{t^{p\theta}}\int_K\int_K|f(x)-f(y)|^pp_t(x,y)\md\mu(y)\md\mu(x).
\end{align*}

If the heat kernel satisfies two-sided Gaussian or sub-Gaussian estimate with walk dimension $d_w$, then it was proved in \cite[Theorem 1.7]{GYZ23} that $\calP(p,\delta)$ is equivalent to $(P_{p,\delta/{d_w}})$ ($(NE)$ and $(KE)$ in their notation).

The weak monotonicity type conditions come naturally in the critical exponent phenomenon of Korevaar-Schoen spaces and Besov spaces, see \cite[Lemma 4.7]{Bau22a}.

The weak monotonicity type conditions have many significant applications. First, they can give fruitful Sobolev inequalities, see \cite[Theorem 1.1]{ABCRST1} with $(P_{p,\theta})$ and \cite[Subsection 4.3]{Bau22a} with $\calP(p,\delta)$. Second, they can give generalizations of the well-known Bourgain-Brezis-Mironescu (BBM) convergence, see \cite[Theorem 1.3]{GYZ23} with $(P_{p,\theta})$. Third, for $p=2$, they are related to the construction of local regular Dirichlet forms, see \cite[Assumptions 1 (A$2^*$)]{KS05}. Fourth, for $p=1$, they are related to BV functions, see \cite[Remark 4.5]{ABCRST2}, which is $\calP(1,1)$, for the Gaussian case, and \cite[Theorem 4.9]{ABCRST3}, which is $\calP(1,d_W-\kappa)$, for the sub-Gaussian case.

A pretty natural and interesting question is to check the validity of $\calP(p,\delta)$\footnote{Implicitly, we  also mean that $\mathcal{KS}^{p,\delta}(K)$ contains non-constant functions, otherwise the result is trivial. Indeed, by \cite[Lemma 4.7]{Bau22a}, for fixed $p\ge1$, there exists at most one $\delta>0$ such that $\calP(p,\delta)$ holds and $\mathcal{KS}^{p,\delta}(K)$ contains non-constant functions simultaneously.} on typical fractal spaces. Recently, it was proved in \cite[Section 6]{Bau22a} that $\calP(p,\delta_p)$ holds on the Vicsek set and the Sierpi\'nski gasket, here $\delta_p$ is a parameter depending on $p$, and in \cite[Section 4]{GYZ23} that $\calP(p,\delta_p)$ holds on homogeneous p.c.f. self-similar sets. The above fractal spaces are finitely ramified, for infinitely ramified fractals, such as the Sierpi\'nski carpet, it was left open and raised by Fabrice Baudoin in \cite[Page 2 and Page 19]{Bau22a} that

\vspace{5pt}

\noindent
\emph{``For some other spaces like the Sierpi\'nski carpet the validity of (1) is still an open
question."}

\vspace{5pt}

\noindent
and

\vspace{5pt}

\noindent
\emph{``The validity or not of the property $\calP(p,\alpha_p)$ would be an interesting question to settle then since the carpet is an example of infinitely ramified fractal, making its geometry very different from the two examples treated below."}

\vspace{5pt}

The main purpose of this paper is to show that $\calP(p,\beta_p/p)$ holds with $\mathcal{KS}^{p,\beta_p/p}(K)$ containing non-constant functions on \emph{generalized Sierpi\'nski carpets} for any $p>\mathrm{dim}_{\mathrm{ARC}}(K,d)$, where $\mathrm{dim}_{\mathrm{ARC}}(K,d)\ge1$ is the Ahlfors regular conformal dimension, and $\beta_p$ is a parameter depending on $p$, which gives an affirmative answer to the above question. Recently, \cite{MS23} proved that $\calP(p,\delta_p)$ holds for any $p>1$ on the standard Sierpi\'nski carpet. Similar to the proofs in \cite{Bau22a,GYZ23}, our proof also relies on the construction of $p$-energy, of course on generalized Sierpi\'nski carpets, which was recently given by \cite{Shi24}. The construction of $p$-energy on fractal spaces or metric measure spaces was firstly given by \cite{HPS04} on Sierpi\'nski gasket type fractals. Recently, it was given by \cite{CGQ22} on p.c.f. self-similar sets, \cite{Shi24} on generalized Sierpi\'nski carpets, and \cite{Kig21book} on metric measure spaces.

This paper is organized as follows. In Section \ref{sec_result}, we introduce basic definitions to state our main result. In Section \ref{sec_proof}, we give the proof of our main result.

\section{Result}\label{sec_result}

In this section, we introduce basic definitions to state our main result.

Let $(K,d)$ be a metric space. Denote $B(x,r)=\{y\in K:d(x,y)<r\}$ for $x\in K$, $r\in(0,+\infty)$. Let $\alpha\in(0,+\infty)$. We say that a Borel measure $\mu$ on $K$ is $\alpha$-Ahlfors regular if there exists some positive constant $C_\mathrm{AR}$ such that
$$\frac{1}{C_{\mathrm{AR}}}r^\alpha\le\mu(B(x,r))\le C_{\mathrm{AR}}r^{\alpha}$$
for any $x\in K$, $r\in(0,\mathrm{diam}(K,d))$, where $\mathrm{diam}(K,d)=\sup\{d(x,y):x,y\in K\}$ is the diameter of the metric space $(K,d)$. If such a Borel measure $\mu$ exists, then by the well-known Frostman lemma (see \cite[Page 112]{Mat95book} and \cite[Page 60]{Fal03book}), the Hausdorff dimension $\mathrm{dim}_{\mathrm{H}}(K,d)$ of the metric space $(K,d)$ is $\alpha$, and we say that the metric $d$ or the metric space $(K,d)$ is ($\alpha$-)Ahlfors regular.

Ahlfors regular conformal dimension was introduced implicitly by Bourdon and Pajot \cite{BP03}. We give the definition as follows.

\begin{mydef}\label{def_ARC}
Let $(K_1,d_1)$, $(K_2,d_2)$ be two metric spaces. We say that $(K_1,d_1)$ is quasisymmetric to $(K_2,d_2)$ if there exist a homeomorphism $f:K_1\to K_2$ and a homeomorphism $\eta:[0,+\infty)\to[0,+\infty)$ such that
$$\frac{d_2(f(x),f(a))}{d_2(f(x),f(b))}\le\eta\left(\frac{d_1(x,a)}{d_1(x,b)}\right)$$
for any $x,a,b\in K_1$ with $x\ne b$. It is easy to verify that quasisymmetry is an equivalence relation. Let $(K,d)$ be a metric space. Define the conformal gauge $\calG(K,d)$ as the set of all metric spaces $(K_0,d_0)$ which are quasisymmetric to $(K,d)$. The Ahlfors regular conformal dimension $\mathrm{dim}_{\mathrm{ARC}}(K,d)$ is defined as
$$\mathrm{dim}_{\mathrm{ARC}}(K,d)=\inf\left\{\mathrm{dim}_\mathrm{H}(K_0,d_0):(K_0,d_0)\in\calG(K,d)\text{ is Ahlfors regular}\right\}.$$
\end{mydef}

\begin{myrmks}
\noindent
\begin{enumerate}[(1)]
\item Let $(K,d)$ be a separable metric space. Then its topological dimension $\rmdim_\rmT(K,d)$ has the following characterization.
$$\rmdim_\rmT(K,d)=\inf\left\{\rmdim_\rmH(K_0,d_0):K_0\text{ is homeomorphic to }K\right\}\in\{-1,0,1,\ldots\}.$$
The above characterization was firstly obtained by Szpilrajn \cite{Szp37} in 1937, see also \cite[VII.4]{HW41book}. Moreover, if $(K,d)$ is locally compact, then $\rmdim_\rmT(K,d)\in\{-1,0\}$ if and only if $K$ is empty or totally disconnected, see \cite[Theorem 1.4.5]{Eng78book}. Hence for a locally compact separable connected non-empty metric space $(K,d)$, we have $\rmdim_\rmT(K,d)\in\{1,2,\ldots\}$.

Recall that the conformal dimension $\rmdim_\rmC(K,d)$ of a metric space $(K,d)$ is defined as
$$\rmdim_\rmC(K,d)=\inf\left\{\rmdim_\rmH(K_0,d_0):(K_0,d_0)\in\calG(K,d)\right\}.$$

Hence for a locally compact separable connected Ahlfors regular non-empty metric space $(K,d)$, we have
$$1\le\rmdim_\rmT(K,d)\le\rmdim_\rmC(K,d)\le\rmdim_{\mathrm{ARC}}(K,d)\le\rmdim_\rmH(K,d).$$
\item For the standard Sierpi\'nski carpet, see Figure \ref{fig_SC}, we have
$$1+\frac{\log2}{\log3}\le\rmdim_\mathrm{ARC}(K,d)<\frac{\log8}{\log3},$$
see \cite[Example 4.3.1]{MT10book}, or \cite{Tys00a} and \cite{KL04}.
\end{enumerate}
\end{myrmks}

Let $\mu$ be a doubling Borel measure on $(K,d)$, that is, there exists some positive constant $C_{\mathrm{VD}}\ge1$ such that $\mu(B(x,2r))\le C_{\mathrm{VD}}\mu(B(x,r))$ for any $x\in K$, $r\in(0,+\infty)$. Then it is easy to show that for any $x,y\in K$, $r,R\in(0,+\infty)$ with $r\le R$, we have
$$\frac{\mu(B(x,R))}{\mu(B(y,r))}\le C_{\mathrm{VD}}\left(\frac{d(x,y)+R}{r}\right)^{\frac{\log C_{\mathrm{VD}}}{\log2}},$$
see \cite[Proposition 3.2]{GHL09}.

For $p\in[1,+\infty)$, $\delta\in(0,+\infty)$, $f\in L^p(K;\mu)$, $r\in(0,+\infty)$, define
$$E_{p,\delta}(f,r)=\frac{1}{r^{p\delta}}\int_K\left(\dashint_{B(x,r)}|f(x)-f(y)|^p\md\mu(y)\right)\md\mu(x),$$
where $\dashint_A=\frac{1}{\mu(A)}\int_A$ for a Borel measurable set $A$ with $\mu(A)\in(0,+\infty)$.
The Korevaar-Schoen space $\mathcal{KS}^{p,\delta}(K)$ and the Besov-Lipschitz space $\mathcal{B}^{p,\delta}(K)$ are defined as follows.
\begin{align*}
\mathcal{KS}^{p,\delta}(K)&=\myset{f\in L^p(K;\mu):\varlimsup_{r\downarrow0}E_{p,\delta}(f,r)<+\infty},\\
\mathcal{B}^{p,\delta}(K)&=\myset{f\in L^p(K;\mu):\sup_{r>0}E_{p,\delta}(f,r)<+\infty}.
\end{align*}
By \cite[Lemma 3.2]{Bau22a}, we have $\mathcal{KS}^{p,\delta}(K)=\mathcal{B}^{p,\delta}(K)$. It is easy to verify that if $\delta_1,\delta_2\in(0,+\infty)$ satisfy $\delta_1<\delta_2$, then $\mathcal{KS}^{p,\delta_1}(K)\supseteq\mathcal{KS}^{p,\delta_2}(K)$, hence $\mathcal{KS}^{p,\delta}(K)$ may be trivial which consists of only constant functions if $\delta\in(0,+\infty)$ is too large.

By the doubling property, for any $c\in(0,+\infty)$, $a\in(1,+\infty)$, $n\in\bbZ$, for any $r\in(ca^{-(n+1)},ca^{-n}]$, we have
$$\frac{1}{C_{\mathrm{VD}}a^{\frac{\log C_{\mathrm{VD}}}{\log2}+p\delta}}E_{p,\delta}(\cdot,ca^{-(n+1)})\le E_{p,\delta}(\cdot,r)\le\left(C_{\mathrm{VD}}a^{\frac{\log C_{\mathrm{VD}}}{\log2}+p\delta}\right)E_{p,\delta}(\cdot,ca^{-n}).$$
Hence
\begin{align*}
&\mathcal{KS}^{p,\delta}(K)=\myset{f\in L^p(K;\mu):\varlimsup_{n\to+\infty}E_{p,\delta}(f,ca^{-n})<+\infty}\\
&=\mathcal{B}^{p,\delta}(K)=\myset{f\in L^p(K;\mu):\sup_{n\ge1}E_{p,\delta}(f,ca^{-n})<+\infty}.
\end{align*}

\begin{mydef}(\cite[Definition 4.5]{Bau22a})
Let $p\ge1$, $\delta>0$. We say that the condition $\calP(p,\delta)$ holds if there exists some positive constant $C$ such that for any $f\in\mathcal{KS}^{p,\delta}(K)$, we have
$$\sup_{r>0}E_{p,\delta}(f,r)\le C\varliminf_{r\downarrow0}E_{p,\delta}(f,r).$$
\end{mydef}

By \cite[Lemma 4.7]{Bau22a}, if $\calP(p,\delta)$ holds with $\mathcal{KS}^{p,\delta}(K)$ containing non-constant functions, then $\mathcal{KS}^{p,\delta_0}(K)$ consists of only constant functions for any $\delta_0>\delta$. Indeed, for fixed $p\ge1$, there exists at most one $\delta>0$ such that $\calP(p,\delta)$ holds and $\mathcal{KS}^{p,\delta}(K)$ contains non-constant functions simultaneously.

Let $D\ge2$, $a\ge3$ be two integers. For any $i\in\{0,1,\ldots,a-1\}^D$, let $f_i:\R^D\to\R^D$ be given by $x\mapsto(x+i)/a$, $x\in\R^D$. Let $S\subsetneqq\{0,1,\ldots,a-1\}^D$ be a non-empty set, then there exists a unique non-empty compact set $K$ in $\R^D$ such that $K=\bigcup_{i\in S}f_i(K)$. Let $F_i=f_i|_K$ and $\mathrm{GSC}(D,a,S)=(K,S,\{F_i\}_{i\in S})$. Let $Q_0=[0,1]^D$ and $Q_1=\cup_{i\in S}f_i(Q_0)$, then $Q_1\subsetneqq Q_0$. The following definition of generalized Sierpi\'nski carpets is from \cite[Subsection 2.2]{BBKT10}.

\begin{mydef}(Generalized Sierpi\'nski carpets)\label{def_GSC}
We say that $\mathrm{GSC}(D,a,S)$ is a generalized Sierpi\'nski carpet if the following four conditions are satisfied.
\begin{enumerate}[(1)]
\item\label{def_GSC1} (Symmetry) For any isometry $f:Q_0\to Q_0$, we have $f(Q_1)=Q_1$.
\item\label{def_GSC2} (Connectedness) $\mathrm{Int}(Q_1)$ is connected, where $\mathrm{Int}(\cdot)$ is the interior of a subset of $\R^D$ in the Euclidean topology.
\item\label{def_GSC3} (Non-diagonality) For any $i_1,\ldots,i_D\in\{0,1,\ldots,a-2\}$, if $\mathrm{Int}\left(Q_1\cap\Pi_{k=1}^D\left[\frac{i_k}{a},\frac{i_k+2}{a}\right]\right)$ is non-empty, then it is connected.
\item\label{def_GSC4} (Borders included) $[0,1]\times\{0\}^{D-1}\subseteq Q_1$.
\end{enumerate}
\end{mydef}

The standard Sierpi\'nski carpet is indeed the generalized Sierpi\'nski carpet
$$\mathrm{GSC}(2,3,\{0,1,2\}^2\backslash\{(1,1)\}),$$
see Figure \ref{fig_SC}. In this paper, we always assume that $\mathrm{GSC}(D,a,S)=(K,S,\{F_i\}_{i\in S})$ is a generalized Sierpi\'nski carpet.

\begin{figure}[ht]
\centering
\includegraphics[width=0.5\textwidth]{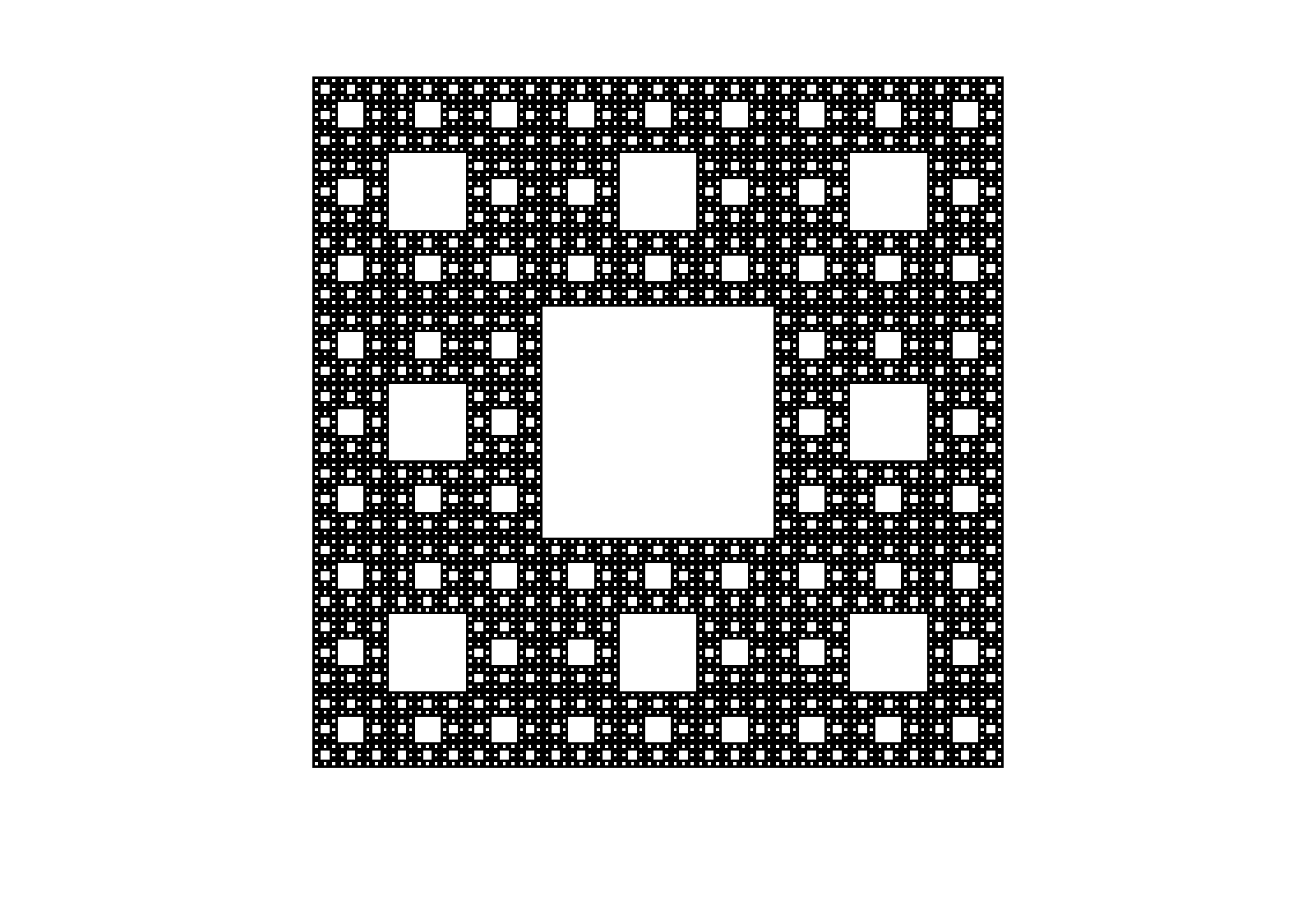}
\caption{The standard Sierpi\'nski carpet}\label{fig_SC}
\end{figure}

Let $N_*=\#S$ and $\alpha=\log N_*/\log a$. Let $d:K\times K\to[0,+\infty)$ be the standard Euclidean metric. Let $\mu$ be the self-similar probability measure on $K$ with weight $(1/N_*,\ldots,1/N_*)$, that is, $\mu$ is the unique Borel probability measure on $K$ satisfying $\mu=\sum_{i\in S}(1/N_*)(\mu\circ F_i^{-1})$. By \cite[Theorem 1.5.7, Proposition 1.5.8]{Kig01book}, we have the Hausdorff dimension $\mathrm{dim}_\rmH(K,d)=\alpha$, the $\alpha$-Hausdorff measure $\calH^\alpha(K)\in(0,+\infty)$, $\mu=(1/\calH^\alpha(K))\calH^\alpha$ is the normalized $\alpha$-Hausdorff measure and $\mu$ is $\alpha$-Ahlfors regular, hence $(K,d)$ is $\alpha$-Ahlfors regular.

Let $W_0=\{\emptyset\}$ and $W_n=\{w=w_1\ldots w_n:w_k\in S\text{ for any }k=1,\ldots,n\}$ for any $n\ge1$. Let $W_\infty=\{w=w_1w_2\ldots:w_k\in S\text{ for any }k\ge1\}$.

For any $w^{(1)}=w^{(1)}_1\ldots w^{(1)}_n\in W_n$ and $w^{(2)}=w^{(2)}_1\ldots w^{(2)}_m\in W_m$, denote
$$w^{(1)}w^{(2)}=w^{(1)}_1\ldots w^{(1)}_nw^{(2)}_1\ldots w^{(2)}_m\in W_{n+m}$$
as the concatenation of $w^{(1)}$ and $w^{(2)}$.

For any $w^{(1)}=w^{(1)}_1\ldots w^{(1)}_n\in W_n$ and $w^{(2)}=w^{(2)}_1w^{(2)}_2\ldots\in W_\infty$, denote
$$w^{(1)}w^{(2)}=w^{(1)}_1\ldots w^{(1)}_nw^{(2)}_1w^{(2)}_2\ldots\in W_{\infty}$$
as the concatenation of $w^{(1)}$ and $w^{(2)}$.

For any $n\ge0$, $w=w_1\ldots w_n\in W_n$, let $F_w=F_{w_1}\circ\ldots\circ F_{w_n}$ and $K_w=F_w(K)$, here we use the convention that $F_\emptyset=\mathrm{Id}$ is the identity map. For any $w=w_1w_2\ldots\in W_\infty$, let $K_w=\cap_{n\ge1}K_{w_1\ldots w_n}$, then $K_w\subseteq K$ contains exactly one point. For any $n\ge0$, $w\in W_n$, $x\in K_w$, there exists $v\in W_\infty$ which is not necessarily unique such that $\{x\}=K_{wv}$.

For any $n\ge1$, let $G_n=(W_n,E_n)$ be the undirected graph with vertex set $W_n$ and edge set $E_n$ given by
$$E_n=\myset{(w,v):w,v\in W_n,w\ne v,K_w\cap K_v\ne\emptyset}.$$
By Definition \ref{def_GSC} (\ref{def_GSC2}), we have $G_n$ is connected, that is, for any $w,v\in W_n$ with $w\ne v$, there exist $k\ge1$ and $w^{(0)},w^{(1)},\ldots,w^{(k)}\in W_n$ with $w^{(0)}=w$, $w^{(k)}=v$ such that $(w^{(i)},w^{(i+1)})\in E_n$ for any $i=0,\ldots,k-1$.

Let
\begin{equation}\label{eqn_L*}
L_*=\sup_{n\ge1}\sup_{w\in W_n}\#\myset{v\in W_n:(w,v)\in E_n},
\end{equation}
then $L_*\le3^D-1$. Let $c\in(0,+\infty)$ be a constant satisfying that for any $n\ge1$, for any $(w,v)\in E_n$, we have
\begin{equation}\label{eqn_smallc}
\sup_{x,y\in K_w} d(x,y)<ca^{-n}\quad\text{and}\quad\sup_{x\in K_w,y\in K_v}d(x,y)<ca^{-n}.
\end{equation}
The existence of such $c$ follows from \cite[Lemma 2.13]{Shi24}.

Let $p\in(0,+\infty)$ and $G=(V,E)$ an undirected graph with vertex set $V$ and edge set $E$, then the $p$-energy on $G$ is defined as
$$\calE_p^G(f)=\sum_{(w,v)\in E}|f(w)-f(v)|^p\text{ for }f\in l(V),$$
where $l(Z)$ is the set of all real-valued functions on the set $Z$.

For $p\in[1,+\infty)$, $n\ge0$, let $M_n:L^p(K;\mu)\to l(W_n)$ be given by
$$M_nf(w)=\int_{K}\left(f\circ F_w\right)\md\mu=\dashint_{K_w}f\md\mu=\frac{1}{\mu(K_w)}\int_{K_w}f\md\mu=N_*^n\int_{K_w}f\md\mu$$
for any $f\in L^p(K;\mu)$, $w\in W_n$.

For any $p>\mathrm{dim}_{\mathrm{ARC}}(K,d)$, let $\rho_p\in(1,+\infty)$ be the parameter given by \cite[Theorem 3.4, Theorem 4.9]{Shi24}. Indeed, the existence of $\rho_p\in(0,+\infty)$ follows from the submultiplicative result \cite[Theorem 3.4]{Shi24}, the bound $\rho_p>1$ is equivalent to the bound $p>\mathrm{dim}_{\mathrm{ARC}}(K,d)$, which follows from \cite[Theorem 1.3]{Car13} or \cite[Theorem 4.6.9, Theorem 4.7.6]{Kig20LNM}. Let $\beta_p=\log(N_*\rho_p)/\log a$, then $\beta_p>\alpha$ for any $p>\mathrm{dim}_{\mathrm{ARC}}(K,d)$. For $n\ge1$, let $\mytilde{\calE}_p^{G_n}(\cdot)=\rho_p^n\calE_p^{G_n}(\cdot)$ be the rescaled $p$-energy.

The main result of this paper is that on generalized Sierpi\'nski carpets, for any $p>\mathrm{dim}_{\mathrm{ARC}}(K,d)$, we have $\calP(p,\beta_p/p)$ holds.

We list the results about $p$-energy on generalized Sierpi\'nski carpets as follows.

\begin{mylem}(\cite[Theorem 2.21]{Shi24})
Assume that $p>\mathrm{dim}_{\mathrm{ARC}}(K,d)$. Let
$$\calF_p=\myset{f\in L^p(K;\mu):\sup_{n\ge1}\mytilde{\calE}_p^{G_n}(M_nf)<+\infty},$$
and
$$\mynorm{f}_{\calF_p}=\mynorm{f}_{L^p(K;\mu)}+\left(\sup_{n\ge1}\mytilde{\calE}_p^{G_n}(M_nf)\right)^{1/p}.$$
Then $(\calF_p,\mynorm{\cdot}_{\calF_p})$ is a reflexive separable Banach space which is continously embedded in the H\"older space
$$C^{0,(\beta_p-\alpha)/p}(K)=\myset{f\in C(K):\sup_
{\mbox{\tiny
$
\begin{subarray}{c}
x,y\in K\\
x\ne y
\end{subarray}
$
}}
\frac{|f(x)-f(y)|}{d(x,y)^{(\beta_p-\alpha)/p}}<+\infty}.$$
Moreover, $\calF_p$ is uniformly dense in $C(K)$.
\end{mylem}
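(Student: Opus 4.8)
Write $Q(f):=\sup_{n\ge1}\mytilde{\calE}_p^{G_n}(M_nf)$, so that $\mynorm{f}_{\calF_p}=\mynorm{f}_{L^p(K;\mu)}+Q(f)^{1/p}$; I would establish the four assertions (Banach space, H\"older embedding, density, reflexivity/separability) in that order. For the Banach space structure: each $M_n\colon L^p(K;\mu)\to l(W_n)$ is bounded, with $|M_nf(w)|\le\mu(K_w)^{-1/p}\mynorm{f}_{L^p(K;\mu)}$ by H\"older, and $l(W_n)$ is finite-dimensional, so $f\mapsto\mytilde{\calE}_p^{G_n}(M_nf)$ is continuous on $L^p(K;\mu)$, $f\mapsto Q(f)$ is $L^p$-lower semicontinuous, and $f\mapsto Q(f)^{1/p}$ is a seminorm (each $\mytilde{\calE}_p^{G_n}(M_n\cdot)^{1/p}$ is a weighted $\ell^p$-norm of an edge-difference vector composed with the linear map $M_n$), so $\mynorm{\cdot}_{\calF_p}$ is a norm. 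If $\{f_k\}$ is $\mynorm{\cdot}_{\calF_p}$-Cauchy it converges in $L^p(K;\mu)$ to some $f$; lower semicontinuity gives $Q(f)\le\liminf_kQ(f_k)<\infty$, so $f\in\calF_p$, and letting the scale run and then taking $\sup_n$ in $\mytilde{\calE}_p^{G_n}(M_n(f_k-f))$ gives $f_k\to f$ in $\calF_p$.

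The geometric heart is the H\"older embedding. I would first prove a cell Poincar\'e inequality: from $M_nf(w)=\tfrac1{N_*}\sum_{i\in S}M_{n+1}f(wi)$ and the fact that the subgraph of $G_{n+1}$ on $\{wi:i\in S\}$ is connected of diameter $\le N_*-1$ (Definition~\ref{def_GSC}\,(\ref{def_GSC2}) plus self-similarity of the incidence pattern), a shortest-path and H\"older estimate give $|M_nf(w)-M_{n+1}f(wj)|\le(N_*-1)^{1-1/p}\rho_p^{-(n+1)/p}Q(f)^{1/p}$ for all $w\in W_n$, $j\in S$. Since $\rho_p>1$ this bound is summable in $n$, so along any nested chain $w^{(n)}\in W_n$ with $x\in K_{w^{(n)}}$ the numbers $M_nf(w^{(n)})$ form a Cauchy sequence; its limit is independent of the chain (competing level-$n$ cells are equal or $G_n$-adjacent, with values differing by $O(\rho_p^{-n/p}Q(f)^{1/p})$), equals $f(x)$ for $\mu$-a.e.\ $x$ by Lebesgue differentiation, and satisfies $|f(x)-M_nf(w^{(n)})|\lesssim\rho_p^{-n/p}Q(f)^{1/p}$; in particular $f$ has a continuous representative and $\mynorm{f}_\infty\lesssim\mynorm{f}_{\calF_p}$. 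Combined with the chain condition for generalized Sierpi\'nski carpets — a combinatorial consequence of Definition~\ref{def_GSC} giving, for some $L_0=L_0(D,a)$, a $G_n$-path of length $\le L_0$ between any $w,v\in W_n$ with $d(K_w,K_v)<a^{-n}$ — one obtains, for $x\ne y$ and $n$ with $a^{-(n+1)}\le d(x,y)<a^{-n}$, that $|f(x)-f(y)|\lesssim\rho_p^{-n/p}Q(f)^{1/p}\lesssim d(x,y)^{(\beta_p-\alpha)/p}\mynorm{f}_{\calF_p}$, using $\rho_p^{-n/p}=(a^{-n})^{(\beta_p-\alpha)/p}$, which follows from $\beta_p=\log(N_*\rho_p)/\log a$ and $\alpha=\log N_*/\log a$. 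This is exactly the continuous embedding into $C^{0,(\beta_p-\alpha)/p}(K)$.

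For uniform density in $C(K)$, I would show that $\calF_p$ contains, for every $n$, a partition of unity $\{\psi_{n,w}\}_{w\in W_n}$ with $\psi_{n,w}$ equal to $1$ on $K_w$ and supported in a bounded neighborhood of $K_w$; then for $f\in C(K)$ and $\veps>0$, choosing $n$ small enough that $f$ oscillates by less than $\veps$ on each such neighborhood, the functions $\sum_{w\in W_n}f(\xi_w)\psi_{n,w}\in\calF_p$ converge uniformly to $f$. The construction of the $\psi_{n,w}$ requires genuinely fractal bump functions adapted to the $p$-energy (Euclidean cutoffs need not lie in $\calF_p$ when $\alpha>p$): one localizes a crossing function and uses self-similarity of the $p$-energy of \cite{Shi24}, and finiteness of the crossing energy is precisely the input $p>\mathrm{dim}_{\mathrm{ARC}}(K,d)$, i.e.\ $\rho_p>1$. (Alternatively, invoke Stone--Weierstrass once $\calF_p$ is known to be a point-separating subalgebra, but the algebra property itself needs a nontrivial Leibniz estimate for the $p$-energy.)

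Reflexivity and separability are the steps I expect to be the main obstacle, since the bare supremum in $\mynorm{\cdot}_{\calF_p}$ is not visibly uniformly convex and an $\ell^\infty$-type direct-sum embedding $f\mapsto(f,(\rho_p^{n/p}\,dM_nf)_n)$ does not reveal reflexivity. The resolution rests on the deepest input from \cite{Shi24}: the quantity $\sup_{n\ge1}\mytilde{\calE}_p^{G_n}(M_nf)$ is comparable to a genuine limiting $p$-energy $\calE_p(f)$ (the ``conductive homogeneity'' of the carpet, underpinned by the submultiplicativity behind the definition of $\rho_p$, \cite[Theorem~3.4]{Shi24}). Granting this, each discrete $\calE_p^{G_n}$ satisfies the Clarkson ($p$-Clarkson) inequality inherited from $\ell^p$, the inequality passes to the limit $\calE_p$, and $f\mapsto(\mynorm{f}_{L^p(K;\mu)}^p+\calE_p(f))^{1/p}$ is a uniformly convex norm equivalent to $\mynorm{\cdot}_{\calF_p}$; reflexivity then follows from the Milman--Pettis theorem. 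Separability follows from the H\"older embedding of the second step together with a countable $\calF_p$-dense family — for instance the $\mytilde{\calE}_p^{G_n}$-minimizers with rational boundary data, $n\ge1$ — whose density again uses the self-similar structure. Thus, beyond soft chaining, the genuinely hard ingredients are the scale-comparability $\sup_n\mytilde{\calE}_p^{G_n}(M_n\cdot)\asymp\calE_p(\cdot)$ and the existence of localized finite-energy functions — exactly the contributions of \cite{Shi24}.
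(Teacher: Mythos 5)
This lemma is not proved in the paper at all: it is imported verbatim as \cite[Theorem 2.21]{Shi24}, so there is no internal argument to compare yours against; the relevant comparison is with the proof in \cite{Shi24}, whose strategy your sketch reconstructs quite faithfully. Your first two steps are essentially complete: lower semicontinuity of $Q(f)=\sup_{n\ge1}\mytilde{\calE}_p^{G_n}(M_nf)$ under $L^p$-convergence gives completeness, and the chaining estimate $|M_nf(w)-M_{n+1}f(wj)|\lesssim\rho_p^{-(n+1)/p}Q(f)^{1/p}$ (summable precisely because $\rho_p>1$, i.e.\ $p>\rmdim_{\mathrm{ARC}}(K,d)$), combined with the bounded-length $G_n$-chain property of generalized Sierpi\'nski carpets (this is where non-diagonality is really used), yields the H\"older embedding with the correct exponent $(\beta_p-\alpha)/p=\log\rho_p/(p\log a)$. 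This is the same mechanism as in \cite{Shi24}.

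The caveat is that your third and fourth steps are road maps rather than proofs, and they point at exactly the two places where the substance of \cite{Shi24} lies: (a) the existence of nonconstant localized finite-$p$-energy functions (needed both for the partitions of unity in the density argument and for point separation), which rests on the combinatorial-modulus estimates behind $\rho_p$ and cannot be obtained by soft means; and (b) the replacement of the $\ell^\infty$-type functional $\sup_n\mytilde{\calE}_p^{G_n}(M_n\cdot)$ by a comparable limiting energy satisfying Clarkson-type inequalities. You are right that the bare supremum is not visibly uniformly convex and that the comparability of the limit with the supremum is exactly the weak monotonicity statement (Lemma \ref{lem_WM} of this paper, i.e.\ \cite[Corollary 4.16]{Shi24}); but asserting that the Clarkson inequality ``passes to the limit'' presupposes that a genuine limit energy has been constructed, which is itself a nontrivial step (subsequential/ultrafilter limits of the rescaled discrete energies). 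So your proposal correctly identifies where the difficulty is concentrated and would be a sound outline for the cited theorem, but as written it defers the hard constructions to the very reference the lemma is quoted from — which is also what the paper does, only more briefly.
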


\begin{mylem}(\cite[Theorem 2.22, Theorem 5.15]{Shi24})
Assume that $p>\mathrm{dim}_{\mathrm{ARC}}(K,d)$. Then
$$\calF_p=\mathcal{KS}^{p,\beta_p/p}(K)=\mathcal{B}^{p,\beta_p/p}(K).$$
\end{mylem}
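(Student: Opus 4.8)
The plan is to establish $\calP(p,\beta_p/p)$ by exploiting the identification $\calF_p=\mathcal{KS}^{p,\beta_p/p}(K)$ from the preceding lemmas together with the self-similar structure of the rescaled $p$-energies $\mytilde{\calE}_p^{G_n}$. The strategy, following Jonsson's idea as adapted in \cite{Bau22a} and \cite{GYZ23}, is to show that the three quantities
$$\sup_{r>0}E_{p,\beta_p/p}(f,r),\qquad \varlimsup_{n\to\infty}\mytilde{\calE}_p^{G_n}(M_nf),\qquad \varliminf_{r\downarrow0}E_{p,\beta_p/p}(f,r)$$
are all comparable up to multiplicative constants independent of $f$. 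Since trivially $\varliminf\le\varlimsup\le\sup$, it suffices to produce one chain of inequalities $\sup_{r>0}E_{p,\beta_p/p}(f,r)\lesssim \varlimsup_n\mytilde{\calE}_p^{G_n}(M_nf)$ and $\varlimsup_n\mytilde{\calE}_p^{G_n}(M_nf)\lesssim\varliminf_{r\downarrow0}E_{p,\beta_p/p}(f,r)$ (the second being the genuinely local-to-limit step).

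First I would compare $E_{p,\beta_p/p}(f,ca^{-n})$ with the graph energy $\mytilde{\calE}_p^{G_n}(M_nf)$ on the same scale. For the upper bound $E_{p,\beta_p/p}(f,ca^{-n})\lesssim \mytilde{\calE}_p^{G_{n}}(M_nf)$, given $x\in K$ pick $w\in W_n$ with $x\in K_w$; by \eqref{eqn_smallc} the ball $B(x,ca^{-n})$ is covered by $K_v$ over the boundedly many ($\le L_*+1$) cells $v$ adjacent to $w$, so $\dashint_{B(x,ca^{-n})}|f(x)-f(y)|^p\md\mu(y)$ is controlled, after the elementary inequality $|a-b|^p\le 2^{p-1}(|a-c|^p+|c-b|^p)$ applied with the cell-averages $M_nf(w),M_nf(v)$ as intermediate points, by a sum of $|M_nf(w)-M_nf(v)|^p$ over edges $(w,v)\in E_n$ plus oscillation terms $\dashint_{K_u}|f-M_nf(u)|^p\md\mu$; Ahlfors regularity turns $a^{-n\alpha}$ into $\mu(K_w)$, the factor $a^{n\beta_p}=a^{n\alpha}\rho_p^n$ produces exactly the $\rho_p^n$ weighting, and the oscillation terms are absorbed because $\calF_p\hookrightarrow C^{0,(\beta_p-\alpha)/p}(K)$ forces $\dashint_{K_u}|f-M_nf(u)|^p\md\mu\lesssim (a^{-n})^{\beta_p-\alpha}\cdot(\text{Hölder seminorm})^p$, and the Hölder seminorm is itself $\lesssim\sup_n\mytilde\calE_p^{G_n}(M_nf)$. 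For the reverse bound $\mytilde\calE_p^{G_n}(M_nf)\lesssim E_{p,\beta_p/p}(f,ca^{-n'})$ for a suitable $n'$ comparable to $n$ (one may need to pass down a couple of scales to guarantee that adjacent cells $K_w,K_v$ both meet a common ball), for each edge $(w,v)$ one writes $|M_nf(w)-M_nf(v)|^p\le 2^{p-1}(|M_nf(w)-f(x)|^p+|f(x)-M_nf(v)|^p)$ for $x$ in the overlap region, averages over $x$ in a ball of radius $\asymp a^{-n}$ sitting inside $K_w\cup K_v$, and sums over the $\le L_* \cdot N_*^n$ edges; again the bounded combinatorics of $E_n$ and Ahlfors regularity convert this into a constant times $E_{p,\beta_p/p}(f,ca^{-n})$ up to the same harmless oscillation terms controlled by the Hölder embedding.

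The decisive point is the \emph{weak monotonicity of the rescaled graph energies}, namely that $\varlimsup_n\mytilde\calE_p^{G_n}(M_nf)\lesssim \varliminf_n\mytilde\calE_p^{G_n}(M_nf)$ with a constant independent of $f$; equivalently that for the sequence $a_n:=\mytilde\calE_p^{G_n}(M_nf)$ one has $\sup_n a_n\lesssim \varliminf_n a_n$. I expect this to be the main obstacle and the heart of the argument, and it is precisely here that the $p$-energy construction of \cite{Shi24} is used: the submultiplicativity of the renormalization constants $\rho_p$ (equivalently, the ``good'' superadditivity/subadditivity of $\calE_p^{G_n}$ across scales coming from \cite[Theorem 3.4]{Shi24}) yields a one-sided comparison $a_{n+m}\le C\, a_n$ up to controlled lower-order corrections, and conversely the discrete ``$p$-harmonic'' extension/contraction estimates (the fact that the level-$n$ approximation $M_nf$ has graph energy comparable to that of its harmonic replacement, and that harmonic replacements only decrease energy) give $a_n\le C\,a_{n+m}+(\text{error})$; combining these across a window of scales, together with the a priori finiteness $\sup_n a_n<\infty$ for $f\in\calF_p$, pins $\sup_n a_n$ to any subsequential liminf. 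Feeding this into the two-sided scale-comparison of the previous paragraph gives
$$\sup_{r>0}E_{p,\beta_p/p}(f,r)\;\lesssim\;\sup_n a_n\;\lesssim\;\varliminf_n a_n\;\lesssim\;\varliminf_{r\downarrow0}E_{p,\beta_p/p}(f,r),$$
which is exactly $\calP(p,\beta_p/p)$. Finally, non-triviality of $\mathcal{KS}^{p,\beta_p/p}(K)=\calF_p$ is immediate since $\calF_p$ is uniformly dense in $C(K)$ and hence contains non-constant functions, completing the affirmative answer to Baudoin's question.
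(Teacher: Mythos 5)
Your proposal does not prove the statement it is meant to prove. The statement here is the identification of function spaces $\calF_p=\mathcal{KS}^{p,\beta_p/p}(K)=\mathcal{B}^{p,\beta_p/p}(K)$, which the paper does not prove at all: it is imported verbatim from \cite[Theorem 2.22, Theorem 5.15]{Shi24}. What you have written is instead a sketch of the paper's \emph{main theorem}, the weak monotonicity property $\calP(p,\beta_p/p)$, and you open by saying you will ``exploit the identification $\calF_p=\mathcal{KS}^{p,\beta_p/p}(K)$ from the preceding lemmas'' --- i.e.\ you take the statement to be proven as a hypothesis. As an argument for the stated lemma this is circular from the first sentence.

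Even if one tries to salvage the part of your sketch that is relevant --- the two-sided comparison between $E_{p,\beta_p/p}(f,ca^{-n})$ and $\mytilde{\calE}_p^{G_n}(M_nf)$, which is indeed the substance of the identification --- there is a genuine gap in the direction $\mathcal{B}^{p,\beta_p/p}(K)\subseteq\calF_p$. To bound $\mytilde{\calE}_p^{G_n}(M_nf)$ by the Besov-type quantities you must control the oscillation terms $\dashint_{K_w}|f-M_nf(w)|^p\,\md\mu$, and you propose to do this via the H\"older embedding $\calF_p\hookrightarrow C^{0,(\beta_p-\alpha)/p}(K)$, whose hypothesis is precisely $f\in\calF_p$ --- the membership you are trying to establish. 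The correct route (as in \cite{Shi24}, and as echoed in Proposition \ref{prop2} of this paper for a related purpose) is a chaining/telescoping argument across scales that bounds the oscillation by the quantities $A_{n+j}(f)$ alone, so that only $\sup_n A^{(n)}_{p,\beta_p}(f)<+\infty$ is needed. Separately, the ``decisive point'' you identify (weak monotonicity of $n\mapsto\mytilde{\calE}_p^{G_n}(M_nf)$) is only gestured at via submultiplicativity and harmonic replacement; in this paper that is Lemma \ref{lem_WM}, again cited from \cite[Corollary 4.16]{Shi24} rather than proven, so your vagueness there is forgivable --- but it does not bear on the lemma actually under discussion.
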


\begin{myrmk}
By the above two results, $\mathcal{KS}^{p,\beta_p/p}(K)$ contains ``many" ``well-hehaved" continuous functions which are particularly non-constant.
\end{myrmk}

Before giving our formal main result, we need to do some reductions.

First, since $(K,d)$ is bounded, we have $E_{p,\delta}(\cdot,r)\le E_{p,\delta}(\cdot,\mathrm{diam}(K,d))$ for any $r>\mathrm{diam}(K,d)$. Since $(K,d)$ satisfies the chain condition (see \cite[Definition 3.4]{GHL03}), by a similar argument to the proof of \cite[Corollary 2.2]{Yan18}, for any $r_1,r_2\in(0,+\infty)$ with $r_1<r_2$, there exists some positive constant $C$ depending on $r_1$, $r_2$ such that $E_{p,\delta}(\cdot,r_2)\le CE_{p,\delta}(\cdot,r_1)$. Therefore, we only need to show that for some fixed $r_0\in(0,+\infty)$, there exists some positive constant $C$ which may depend on $r_0$ such that for any $f\in\mathcal{KS}^{p,\beta_p/p}(K)$, we have
$$\sup_{r\in(0,r_0)}E_{p,\beta_p/p}(f,r)\le C\varliminf_{r\downarrow0}E_{p,\beta_p/p}(f,r).$$

Second, for $n\ge1$, $f\in L^p(K;\mu)$, let
$$A^{(n)}_{p,\beta_p}(f)=a^{(\alpha+\beta_p)n}\int_K\int_{B(x,ca^{-n})}|f(x)-f(y)|^p\md\mu(y)\md\mu(x),$$
where $c$ is the constant given by Equation (\ref{eqn_smallc}). Since $\mu$ is $\alpha$-Ahlfors regular, it is easy to verify that
$$\frac{1}{C_{\mathrm{AR}}c^{\alpha+\beta_p}}A^{(n)}_{p,\beta_p}\le E_{p,\beta_p/p}(\cdot,ca^{-n})\le\frac{C_{\mathrm{AR}}}{c^{\alpha+\beta_p}}A^{(n)}_{p,\beta_p}.$$
Hence in terms of $A^{(n)}_{p,\beta_p}$, we have
\begin{align*}
\calF_p&=\mathcal{KS}^{p,\beta_p/p}(K)=\myset{f\in L^p(K;\mu):\varlimsup_{n\to+\infty}A^{(n)}_{p,\beta_p}(f)<+\infty}\\
&=\mathcal{B}^{p,\beta_p/p}(K)=\myset{f\in L^p(K;\mu):\sup_{n\ge1}A^{(n)}_{p,\beta_p}(f)<+\infty}.
\end{align*}

Combining the above two reductions, we state the main result of this paper as follows.

\begin{mythm}\label{thm_main}
Assume that $p>\mathrm{dim}_{\mathrm{ARC}}(K,d)$. Then there exists some positive constant $C$ such that for any $f\in\mathcal{KS}^{p,\beta_p/p}(K)$, we have
$$\sup_{n\ge1}A^{(n)}_{p,\beta_p}(f)\le C\varliminf_{n\to+\infty}A^{(n)}_{p,\beta_p}(f)$$

\end{mythm}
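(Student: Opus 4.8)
The plan is to compare the "chain-type" quantity $A^{(n)}_{p,\beta_p}$ directly with the rescaled graph energy $\mytilde{\calE}_p^{G_n}(M_n f)$, and then exploit the fact that $\mytilde{\calE}_p^{G_n}(M_n f)$ is (up to multiplicative constants) monotone in $n$ by the submultiplicativity/ weak-monotonicity properties of $\rho_p$ coming from \cite{Shi24}. Concretely, I would carry out the argument in three blocks. First, an \emph{upper bound}: for $f\in\mathcal{KS}^{p,\beta_p/p}(K)=\calF_p$, I would show
$$
A^{(n)}_{p,\beta_p}(f)\ \le\ C_1\,a^{\beta_p n}\,\mu\text{-averaged differences over }K_w,K_v\text{ for neighbours }(w,v)\in E_n,
$$
and then bound $|f(x)-f(y)|^p$ for $x\in K_w$, $y\in K_v$ (or $x,y\in K_w$) by $|M_{n}f(w)-M_{n}f(v)|^p$ plus oscillation terms $\sup_{z\in K_w}|f(z)-M_nf(w)|^p$. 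The key point is that by the H\"older embedding $\calF_p\hookrightarrow C^{0,(\beta_p-\alpha)/p}(K)$ and the self-similar structure, each such oscillation term on a word $w$ of length $n$ is controlled by $a^{-(\beta_p-\alpha)n}$ times a "local" energy on the subcarpet $K_w$, and summing over $w\in W_n$ (using $L_*\le 3^D-1$ to control overlaps) gives $A^{(n)}_{p,\beta_p}(f)\le C_2\,\mytilde{\calE}_p^{G_n}(M_nf)$; here I use $\beta_p=\log(N_*\rho_p)/\log a$ to see that the powers of $a$ combine with $\rho_p^{\,n}$ correctly, and Jonsson's 1996 idea (alluded to in the abstract) for passing from pointwise oscillation on cells to discrete energies.

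Second, a \emph{lower bound}: conversely, $\varliminf_n A^{(n)}_{p,\beta_p}(f)\ge c_0\,\varliminf_n \mytilde{\calE}_p^{G_n}(M_nf)$. For a neighbouring pair $(w,v)\in E_n$ with $K_w\cap K_v\ne\emptyset$, one has $d(x,y)<ca^{-n}$ for $x\in K_w$, $y\in K_v$ by \eqref{eqn_smallc}, so $B(x,ca^{-n})\supseteq K_v$ for $x$ near the junction; integrating $|f(x)-f(y)|^p$ over $x\in K_w$, $y\in K_v$ and using $\mu(K_w)=N_*^{-n}$ together with Jensen's inequality $\dashint_{K_w}\dashint_{K_v}|f(x)-f(y)|^p\ge |M_nf(w)-M_nf(v)|^p$ reproduces each edge term $|M_nf(w)-M_nf(v)|^p$ with the correct weight $a^{(\alpha+\beta_p)n}\cdot N_*^{-2n}=a^{\beta_p n} N_*^{-n}=\rho_p^{\,n}$. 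Summing over $E_n$ (each cell $K_w$ appears in at most $L_*$ pairs) yields the claimed lower bound.

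Third, the \emph{monotonicity input}: by \cite{Shi24} the rescaled energies satisfy a two-sided comparison $c\,\mytilde{\calE}_p^{G_m}(M_mf)\le \mytilde{\calE}_p^{G_n}(M_nf)$ for all $m\ge n$ (or at least $\sup_n\mytilde{\calE}_p^{G_n}(M_nf)\le C\liminf_n\mytilde{\calE}_p^{G_n}(M_nf)$ for $f\in\calF_p$, which is exactly the content of $\calF_p=\mathcal{KS}^{p,\beta_p/p}(K)$ with uniform constants). Chaining the three blocks gives $\sup_{n}A^{(n)}_{p,\beta_p}(f)\le C_2 \sup_n \mytilde{\calE}_p^{G_n}(M_nf)\le C_3\liminf_n\mytilde{\calE}_p^{G_n}(M_nf)\le C_4\liminf_n A^{(n)}_{p,\beta_p}(f)$, which is Theorem \ref{thm_main}.

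I expect the \textbf{main obstacle} to be the upper bound, specifically the step that controls the intra-cell oscillation $\sup_{z\in K_w}|f(z)-M_nf(w)|$ by a \emph{localized} $p$-energy on $K_w$ rather than merely by the global H\"older seminorm of $f$. Using the global seminorm alone would only give a bound by $\sup_n\mytilde{\calE}_p^{G_n}(M_nf)$ times a constant that does not decay when summed over the $N_*^n$ cells, losing the scaling. The fix is to run the oscillation estimate \emph{self-similarly}: apply the H\"older embedding to $f\circ F_w\in\calF_p$ on the whole carpet $K$, whose $\calF_p$-seminorm is $\rho_p^{\,-n/p}$ times a piece of the energy supported on $W_{*}$ below $w$, and then sum these pieces — the overlaps between cells being bounded by $L_*$. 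Making this rescaling bookkeeping precise, and checking that the resulting constant is independent of $n$ and $f$, is the technical heart of the argument; everything else is Jensen's inequality, Ahlfors regularity, and the already-quoted structural results of \cite{Shi24}.
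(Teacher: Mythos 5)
Your three-block architecture matches the paper's overall strategy (compare $A^{(n)}_{p,\beta_p}$ with $\mytilde{\calE}_p^{G_n}(M_n\cdot)$ in both directions and invoke the weak monotonicity of the rescaled graph energies), but your second block takes a genuinely different and simpler route through the harder direction. The paper does \emph{not} use Jensen's inequality there: it estimates $|M_nf(w)-M_nf(v)|^p$ by the three-term splitting through $f(x)$ and $f(y)$, which drags in the cell-oscillation terms $\int_{K_w}|f-M_nf(w)|^p\,\md\mu$; controlling those forces an annular decomposition $A^{(n)}_{p,\beta_p}=a^{(\alpha+\beta_p)n}\sum_jA_{n+j}$, a Jonsson-type telescoping estimate (Proposition \ref{prop2}), and an absorption argument (Proposition \ref{prop4}) in which the tail of the weighted annular sum is reabsorbed into $\varliminf_n\mytilde{\calE}_p^{G_n}(M_nf)$. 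This is exactly the paper's announced improvement over \cite[Equation (5.20)]{Shi24}, whose right-hand side $\sup_{l\ge n}A^{(l)}_{p,\beta_p}(f)$ would only produce a bound by $\varlimsup_nA^{(n)}_{p,\beta_p}(f)$. Your Jensen step $|M_nf(w)-M_nf(v)|^p\le\dashint_{K_w}\dashint_{K_v}|f(x)-f(y)|^p\md\mu(y)\md\mu(x)$, combined with the inclusion $K_v\subseteq B(x,ca^{-n})$ for \emph{every} $x\in K_w$ (this is exactly Equation (\ref{eqn_smallc}); you do not need to restrict to $x$ ``near the junction'') and the degree bound $L_*$, gives the single-scale inequality $\mytilde{\calE}_p^{G_n}(M_nf)\le L_*A^{(n)}_{p,\beta_p}(f)$ for each $n$; your weight bookkeeping $a^{(\alpha+\beta_p)n}N_*^{-2n}=\rho_p^n$ is correct. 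Passing to a subsequence along which $A^{(n)}_{p,\beta_p}(f)$ attains its $\varliminf$ then yields $\varliminf_n\mytilde{\calE}_p^{G_n}(M_nf)\le L_*\varliminf_nA^{(n)}_{p,\beta_p}(f)$ directly, bypassing Propositions \ref{prop2}--\ref{prop4}. I see no error in this shortcut; it is a real simplification of the published argument.

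The gap is in your first block, precisely where you predicted it. Your proposed fix --- apply the H\"older embedding to $f\circ F_w$ and sum the localized energy contributions over $w\in W_n$ --- does not close as stated: for each fixed $w$ you get a bound by $\sup_m$ of a local piece of $\mytilde{\calE}_p^{G_{n+m}}(M_{n+m}f)$, and $\sum_{w\in W_n}\sup_m(\cdots)$ cannot be replaced by $\sup_m\sum_{w\in W_n}(\cdots)$ in the direction you need, so the overlap counting by $L_*$ does not rescue the scaling. The clean repairs are either Jonsson's telescoping (on each cell, sum a geometric series of inter-scale averaged differences rather than taking a supremum --- this is how the paper's Proposition \ref{prop2} and the proof of \cite[Lemma 5.12]{Shi24} proceed), or simply to quote the $(p,p)$-Poincar\'e inequality \cite[Lemma 5.12]{Shi24} together with \cite[Equation (5.19)]{Shi24} as black boxes, which is all the paper does for this direction (Proposition \ref{prop1}). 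With that ingredient supplied, your concluding chain $\sup_nA^{(n)}_{p,\beta_p}(f)\le C\sup_n\mytilde{\calE}_p^{G_n}(M_nf)\le C'\varliminf_n\mytilde{\calE}_p^{G_n}(M_nf)\le C''\varliminf_nA^{(n)}_{p,\beta_p}(f)$ is complete.
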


From now on, we always assume that $p>\mathrm{dim}_{\mathrm{ARC}}(K,d)$.

\section{Proof}\label{sec_proof}

In this section, we give the proof of our main result Theorem \ref{thm_main}. Before that, we collect some necessary technical results from \cite{Shi24} as follows.

First, we have the following weak monotonicity result.

\begin{mylem}\label{lem_WM}(\cite[Corollary 4.16]{Shi24})
There exists some positive constant $C_{\mathrm{WM}}$ such that
$$\mytilde{\calE}_{p}^{G_{n}}(M_{n}f)\le C_{\mathrm{WM}}\mytilde{\calE}_{p}^{G_{n+m}}(M_{n+m}f)$$
for any $n,m\ge1$, $f\in L^p(K;\mu)$. In particular
$$\sup_{n\ge1}\mytilde{\calE}_{p}^{G_{n}}(M_{n}f)\le C_{\mathrm{WM}}\varliminf_{n\to+\infty}\mytilde{\calE}_{p}^{G_{n}}(M_{n}f)$$
for any $f\in L^p(K;\mu)$.
\end{mylem}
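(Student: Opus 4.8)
The plan is to prove Theorem \ref{thm_main} by transferring the discrete weak-monotonicity of Lemma \ref{lem_WM} to the continuous quantity $A^{(n)}_{p,\beta_p}(f)$. The bridge is a two-sided comparison, valid for each fixed $n$, between $A^{(n)}_{p,\beta_p}(f)$ and the rescaled discrete $p$-energy $\mytilde{\calE}_p^{G_n}(M_nf)$. Once such a comparison is in hand, the theorem follows immediately: writing $A^{(n)}\asymp \mytilde{\calE}_p^{G_n}(M_nf)$ with constants independent of $n$ and $f$, we get
$$
\sup_{n\ge1}A^{(n)}_{p,\beta_p}(f)\lesssim\sup_{n\ge1}\mytilde{\calE}_p^{G_n}(M_nf)\le C_{\mathrm{WM}}\varliminf_{n\to+\infty}\mytilde{\calE}_p^{G_n}(M_nf)\lesssim\varliminf_{n\to+\infty}A^{(n)}_{p,\beta_p}(f),
$$
where the middle inequality is exactly Lemma \ref{lem_WM} and the outer comparisons absorb the bounded constants into the final $C$. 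Note that the $\varliminf$ survives the comparison because the constants are uniform in $n$.

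For the comparison itself, I would proceed in two directions. For the upper bound $A^{(n)}_{p,\beta_p}(f)\lesssim\mytilde{\calE}_p^{G_n}(M_nf)$, decompose the double integral over the diagonal strip $\{(x,y):d(x,y)<ca^{-n}\}$ according to which cells $K_w,K_v$ ($w,v\in W_n$) contain $x$ and $y$. By the choice of $c$ in Equation (\ref{eqn_smallc}), if $B(x,ca^{-n})$ meets $K_v$ then $x$ and $y$ lie in cells that are either equal or $E_n$-neighbors, so only $O(L_*)$ cells contribute for each $x$. On each such pair one replaces pointwise values $f(x),f(y)$ by the cell averages $M_nf(w),M_nf(v)$ at the cost of oscillation terms $\dashint_{K_w}|f-M_nf(w)|^p$; these are controlled by the local energy via a Poincar\'e-type inequality on cells, which is available because $\calF_p$ embeds into $C^{0,(\beta_p-\alpha)/p}(K)$ and the construction in \cite{Shi24} is self-similar. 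The scaling factor $a^{(\alpha+\beta_p)n}$ combined with $\mu(K_w)=N_*^{-n}$ and $\beta_p=\log(N_*\rho_p)/\log a$ produces precisely the weight $\rho_p^n$ defining $\mytilde{\calE}_p^{G_n}$. The reverse bound $\mytilde{\calE}_p^{G_n}(M_nf)\lesssim A^{(n)}_{p,\beta_p}(f)$ is more elementary: for each edge $(w,v)\in E_n$ estimate $|M_nf(w)-M_nf(v)|^p$ by Jensen applied to the double average $\dashint_{K_w}\dashint_{K_v}|f(x)-f(y)|^p$, and observe that, again by Equation (\ref{eqn_smallc}), every pair $(x,y)\in K_w\times K_v$ satisfies $d(x,y)<ca^{-n}$, so $y\in B(x,ca^{-n})$ and the integrand is counted in $A^{(n)}_{p,\beta_p}(f)$ with the matching power of $a$.

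I expect the main obstacle to be the upper bound, specifically the passage from pointwise differences $|f(x)-f(y)|$ to the discrete differences $|M_nf(w)-M_nf(v)|$ of averages. The difficulty is that controlling the oscillation of $f$ within a single cell $K_w$ by the \emph{rescaled} energy $\rho_p^n\calE_p^{G_n}$ requires a uniform-in-$n$ Poincar\'e inequality at scale $a^{-n}$, which must come from the self-similarity and the regularity theory of the $p$-energy in \cite{Shi24} rather than from a classical gradient bound, since no upper gradient structure is available on the carpet. Handling the overlaps where $B(x,ca^{-n})$ straddles several cells, and verifying that the accumulated constants genuinely do not depend on $n$ (so that the $\varliminf$ is preserved), is where the care is needed; the bounded geometry $L_*\le 3^D-1$ and the $\alpha$-Ahlfors regularity of $\mu$ are the tools that keep these constants uniform.
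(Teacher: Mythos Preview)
You are not proving the stated lemma. Lemma \ref{lem_WM} is the discrete weak monotonicity of the rescaled graph energies; the paper does not prove it but quotes it from \cite[Corollary 4.16]{Shi24}. Your proposal instead takes Lemma \ref{lem_WM} as a black box and sketches a proof of Theorem \ref{thm_main}. I will therefore compare your sketch against the paper's proof of Theorem \ref{thm_main}.

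Your architecture agrees with the paper's: compare $A^{(n)}_{p,\beta_p}$ with $\mytilde{\calE}_p^{G_n}(M_nf)$ in both directions and then invoke Lemma \ref{lem_WM}. Your ``reverse'' bound $\mytilde{\calE}_p^{G_n}(M_nf)\lesssim A^{(n)}_{p,\beta_p}(f)$ via Jensen on the double average $\dashint_{K_w}\dashint_{K_v}|f(x)-f(y)|^p$ is correct and is actually \emph{simpler} than the paper's route. The paper instead splits $|M_nf(w)-M_nf(v)|^p$ into two cell-oscillation terms plus a cross term, which forces it through the annulus decomposition of Proposition \ref{prop2}, the bound of Proposition \ref{prop3}, and the absorption argument of Proposition \ref{prop4}. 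Your Jensen bound gives $\mytilde{\calE}_p^{G_n}(M_nf)\le L_* A^{(n)}_{p,\beta_p}(f)$ at each level, hence $\varliminf_n\mytilde{\calE}_p^{G_n}(M_nf)\le L_*\varliminf_n A^{(n)}_{p,\beta_p}(f)$ in one line, which is Proposition \ref{prop4} without any of Propositions \ref{prop2}--\ref{prop3}.

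The gap is in your other direction. You want $A^{(n)}_{p,\beta_p}(f)\lesssim\mytilde{\calE}_p^{G_n}(M_nf)$ at the \emph{same} level $n$, controlling the cell oscillations $\dashint_{K_w}|f-M_nf(w)|^p$ by a ``Poincar\'e-type inequality on cells''. No such same-scale inequality exists: the graph energy $\calE_p^{G_n}(M_nf)$ sees only the averages $M_nf(w)$ and is blind to oscillation of $f$ within a single cell, so a function with equal cell averages but large in-cell oscillation kills your proposed bound. The Poincar\'e inequality that \emph{is} available, Lemma \ref{lem_lem512}, controls the level-$n$ oscillation by $\varliminf_{l\to\infty}\mytilde{\calE}_p^{G_{n+l}}(M_{n+l}f)$, i.e.\ by energies at strictly finer scales. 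Using it, what you actually obtain is $A^{(n)}_{p,\beta_p}(f)\lesssim\sup_{l\ge n}\mytilde{\calE}_p^{G_l}(M_lf)$, which is precisely the paper's Proposition \ref{prop1}. This weaker form is enough: chaining it with Lemma \ref{lem_WM} and your Jensen bound gives
\[
A^{(n)}_{p,\beta_p}(f)\;\lesssim\;\sup_{l\ge n}\mytilde{\calE}_p^{G_l}(M_lf)\;\le\;C_{\mathrm{WM}}\varliminf_{l\to\infty}\mytilde{\calE}_p^{G_l}(M_lf)\;\le\;C_{\mathrm{WM}}L_*\varliminf_{l\to\infty}A^{(l)}_{p,\beta_p}(f),
\]
and the theorem follows. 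So once you replace the unattainable same-level comparison by the correct one, your plan works and is shorter than the paper's.
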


Second, we have the following $(p,p)$-Poincar\'e inequality.

\begin{mylem}\label{lem_lem512}(\cite[Lemma 5.12]{Shi24})
There exists some positive constant $C_{\mathrm{PI}\text{-}\mathrm{KS}}$ such that
$$a^{{\beta_p}n}\sum_{w\in W_n}\int_{K_w}|f-M_nf(w)|^p\md\mu\le C_{\mathrm{PI}\text{-}\mathrm{KS}}\varliminf_{l\to+\infty}\mytilde{\calE}_{p}^{G_{n+l}}(M_{n+l}f)$$
for any $n\ge1$, $f\in\calF_p$.
\end{mylem}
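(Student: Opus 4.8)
The plan is to prove this Poincar\'e-type inequality by a scale-by-scale telescoping argument along cell addresses, reducing the $L^p$-oscillation of $f$ on each cell to a weighted sum of discrete energies at all finer scales, and then invoking the weak monotonicity Lemma \ref{lem_WM} to replace every level's energy by the limit infimum. Throughout I use that $f\in\calF_p$ has a continuous representative (by the H\"older embedding), that $\mu(K_w)=N_*^{-n}$ for $w\in W_n$, and the identity $M_nf(w)=\frac{1}{N_*}\sum_{i\in S}M_{n+1}f(wi)$, i.e.\ the average of $f$ over $K_w$ is exactly the mean of the averages over its $N_*$ child cells.

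The first ingredient is a single-scale Poincar\'e inequality on the cell-adjacency graph $G_1$. Since $G_1$ is a finite connected graph (Definition \ref{def_GSC}(\ref{def_GSC2})), there is a constant $C_1$ depending only on $(D,a,S)$ with $\sum_{i\in S}|g(i)-\bar g|^p\le C_1\calE_p^{G_1}(g)$ for every $g\in l(S)$, where $\bar g=\frac{1}{N_*}\sum_{i\in S}g(i)$; this follows by bounding each $|g(i)-g(j)|$ through a telescoping sum along a path in $G_1$ of length at most $\mathrm{diam}(G_1)$. By self-similarity, for any $m\ge1$ and $u\in W_m$ the subcells $\{K_{ui}\}_{i\in S}$ with their adjacency form a graph isomorphic to $G_1$, so applying the inequality to $g(i)=M_{m+1}f(ui)$, whose mean is $M_mf(u)$, gives
$$\sum_{i\in S}|M_{m+1}f(ui)-M_mf(u)|^p\le C_1\sum_{(i,j)\in E_1}|M_{m+1}f(ui)-M_{m+1}f(uj)|^p.$$
Summing over $u\in W_m$ and noting that the pairs $(ui,uj)$ with $(i,j)\in E_1$ are distinct ``within-parent'' edges of $G_{m+1}$, the right-hand side is at most $C_1\calE_p^{G_{m+1}}(M_{m+1}f)$.

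The second ingredient is the telescoping. For $w\in W_n$ and $x\in K_w$, choose an address $v\in W_\infty$ with $\{x\}=K_{wv}$; by continuity $M_{n+l}f(wv_1\ldots v_l)\to f(x)$, so
$$f(x)-M_nf(w)=\sum_{l=0}^{\infty}\bigl(M_{n+l+1}f(wv_1\ldots v_{l+1})-M_{n+l}f(wv_1\ldots v_l)\bigr).$$
I then apply H\"older with geometric weights $\sigma^l$ ($\sigma\in(0,1)$ to be chosen) to get $|f(x)-M_nf(w)|^p\le(1-\sigma)^{-(p-1)}\sum_{l\ge0}\sigma^{-(p-1)l}|{\cdots}|^p$, where $|{\cdots}|^p$ is the $l$-th telescoping summand, integrate over $x\in K_w$ (grouping $x$ by its level-$(n+l)$ cell, each child cell carrying mass $N_*^{-(n+l+1)}$), sum over $w\in W_n$, and insert the single-scale bound at each level $m=n+l$. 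This yields
$$\sum_{w\in W_n}\int_{K_w}|f-M_nf(w)|^p\md\mu\le C_1(1-\sigma)^{-(p-1)}\sum_{l=0}^{\infty}\sigma^{-(p-1)l}N_*^{-(n+l+1)}\calE_p^{G_{n+l+1}}(M_{n+l+1}f).$$

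Finally, multiplying by $a^{\beta_p n}=N_*^n\rho_p^n$ and using $\mytilde{\calE}_p^{G_{n+l+1}}=\rho_p^{n+l+1}\calE_p^{G_{n+l+1}}$ rewrites the general term as $(N_*\rho_p)^{-1}\bigl(\sigma^{-(p-1)}/(N_*\rho_p)\bigr)^l\mytilde{\calE}_p^{G_{n+l+1}}(M_{n+l+1}f)$ up to the fixed constant. Since $N_*\rho_p=a^{\beta_p}>1$, I can fix $\sigma\in(0,1)$ close enough to $1$ that $\sigma^{-(p-1)}<N_*\rho_p$, making the geometric ratio strictly less than $1$; and by Lemma \ref{lem_WM} every term satisfies $\mytilde{\calE}_p^{G_{n+l+1}}(M_{n+l+1}f)\le C_{\mathrm{WM}}\varliminf_{k\to\infty}\mytilde{\calE}_p^{G_k}(M_kf)$, so summing the convergent series gives the claim with $C_{\mathrm{PI}\text{-}\mathrm{KS}}$ independent of $n$ and $f$. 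I expect the main obstacle to be exactly this exponent bookkeeping: the argument closes only because $a^{\beta_p n}$ precisely compensates the measure decay $N_*^{-(n+l+1)}$ and the energy rescaling $\rho_p$, so that summability rests on the identity $\beta_p=\log(N_*\rho_p)/\log a$ together with $\rho_p>1$; the remaining care is to justify the address-telescoping for $\mu$-a.e.\ $x$, handling the measure-zero set of cell boundaries where the address is non-unique.
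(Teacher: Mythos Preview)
Your proposal is correct. The paper does not supply its own proof of this lemma; it simply quotes the result from \cite[Lemma 5.12]{Shi24}, so there is no in-paper argument to compare against. Your telescoping-plus-single-scale-Poincar\'e approach is the natural one: the identity $M_nf(w)=\frac{1}{N_*}\sum_{i\in S}M_{n+1}f(wi)$, the connectedness of $G_1$, the a.e.\ uniqueness of addresses under the open set condition, and the exponent bookkeeping $a^{\beta_p}=N_*\rho_p>1$ all check out, and the final appeal to Lemma~\ref{lem_WM} to replace each $\mytilde{\calE}_p^{G_{n+l+1}}$ by $\varliminf$ is exactly what makes the constant uniform in $n$. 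The only cosmetic slip is the phrase ``grouping $x$ by its level-$(n+l)$ cell'' where the integrand actually depends on the level-$(n+l+1)$ address, but your stated mass $N_*^{-(n+l+1)}$ and the displayed formula are correct.
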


Third, we have the following H\"older regularity result.

\begin{mylem}\label{lem_lem513}(\cite[Lemma 5.13]{Shi24})
There exists some positive constant $C_{5.13}$ such that
$$|f(x)-f(y)|^p\le C_{5.13}d(x,y)^{\beta_p-\alpha}\sup_{n\ge1}A^{(n)}_{p,\beta_p}(f)$$
for any $x,y\in K$, for any $f\in\calF_p$.
\end{mylem}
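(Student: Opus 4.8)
The plan is to deduce this homogeneous Hölder bound from the Hölder embedding $\calF_p\hookrightarrow C^{0,(\beta_p-\alpha)/p}(K)$ already recorded above (\cite[Theorem 2.21]{Shi24}); the only real point is to replace the full $\calF_p$-norm by the seminorm $\sup_{n\ge1}A^{(n)}_{p,\beta_p}(f)$. I would first observe that every quantity in sight is invariant under adding a constant to $f$: the left-hand side is a difference, $A^{(n)}_{p,\beta_p}$ only sees differences $f(x)-f(y)$, and the graph energies $\mytilde{\calE}_p^{G_n}(M_nf)$ depend only on differences of the averages $M_nf$. Hence it suffices to apply the embedding to $g=f-M_0f$, where $M_0f=\int_K f\,\md\mu$, and then bound $\mynorm{g}_{\calF_p}$ by $\sup_{n\ge1}A^{(n)}_{p,\beta_p}(f)$.

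The first step is the elementary comparison $\sup_{n\ge1}\mytilde{\calE}_p^{G_n}(M_nf)\le C\sup_{n\ge1}A^{(n)}_{p,\beta_p}(f)$. For an edge $(w,v)\in E_n$, Equation (\ref{eqn_smallc}) gives $K_v\subseteq B(x,ca^{-n})$ for every $x\in K_w$, so Jensen's inequality followed by enlarging $K_v$ to the ball yields $|M_nf(w)-M_nf(v)|^p\le N_*^{2n}\int_{K_w}\int_{B(x,ca^{-n})}|f(x)-f(y)|^p\,\md\mu(y)\,\md\mu(x)$. Summing over $E_n$ (each $w$ has at most $L_*$ neighbours) and multiplying by $\rho_p^n$, the identities $\rho_p=a^{\beta_p-\alpha}$ and $N_*=a^\alpha$ make the prefactor $\rho_p^nN_*^{2n}a^{-(\alpha+\beta_p)n}$ collapse to $1$, giving exactly $\mytilde{\calE}_p^{G_n}(M_nf)\le L_*A^{(n)}_{p,\beta_p}(f)$. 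Since the energy term of $\mynorm{g}_{\calF_p}$ coincides with that of $f$, it is thus $\le\big(L_*\sup_{n\ge1}A^{(n)}_{p,\beta_p}(f)\big)^{1/p}$.

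The second step is a global Poincaré inequality $\mynorm{f-M_0f}_{L^p}^p\le C\sup_{n\ge1}A^{(n)}_{p,\beta_p}(f)$, handling the remaining $L^p$ part of $\mynorm{g}_{\calF_p}$. On each cell $K_w$ with $w\in W_1$ I would split $|f-M_0f|\le|f-M_1f(w)|+|M_1f(w)-M_0f|$: the first contribution is controlled by Lemma \ref{lem_lem512} with $n=1$ together with Lemma \ref{lem_WM}, while the second is the discrete variance of $\{M_1f(w)\}_{w\in W_1}$ about its mean $M_0f$, which is dominated by $\calE_p^{G_1}(M_1f)=\rho_p^{-1}\mytilde{\calE}_p^{G_1}(M_1f)$ via the Poincaré inequality on the finite connected graph $G_1$. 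By step one both are $\le C\sup_{n\ge1}A^{(n)}_{p,\beta_p}(f)$. Feeding $\mynorm{g}_{\calF_p}^p\le C\sup_{n\ge1}A^{(n)}_{p,\beta_p}(f)$ into the embedding and raising to the power $p$ then gives the claim.

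The main obstacle is genuinely step two, the passage from "differences only" to the actual $L^p$-size of $f-M_0f$; everything else is bookkeeping with the self-similar scaling. If one prefers not to rely on the precise shape of the $\calF_p$-norm, the same estimate follows from a self-contained Morrey-type chaining argument: telescope $f(x)$ against the cell averages $M_mf(w_m)$ along the cells shrinking to $x$, bound each increment by the single-cell form of Lemma \ref{lem_lem512}, sum the resulting geometric series (which converges precisely because $\beta_p>\alpha$), and then bridge the two level-$n_0$ cells containing $x$ and $y$ (with $a^{-n_0}\simeq d(x,y)$) by a path of uniformly bounded length in $G_{n_0}$. In that alternative the carpet-specific difficulty, reflecting the infinite ramification, is exactly this last bridging step, namely that cells at distance $\lesssim a^{-n_0}$ are joined by uniformly short paths in $G_{n_0}$, which rests on the connectedness in Definition \ref{def_GSC}(\ref{def_GSC2}) together with Ahlfors regularity.
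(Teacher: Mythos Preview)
The paper does not prove this lemma; it is quoted verbatim from \cite[Lemma 5.13]{Shi24} and used as a black box, so there is no in-paper argument to compare against.

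Your proposal is correct. Both routes you sketch work. The first (subtract the mean, feed into the embedding of \cite[Theorem 2.21]{Shi24}) is clean and the two steps are sound: your Step~1 computation $\mytilde{\calE}_p^{G_n}(M_nf)\le L_*A^{(n)}_{p,\beta_p}(f)$ is exactly the content of \cite[Equation (5.20)]{Shi24}, which the paper itself cites; and Step~2 is a legitimate global Poincar\'e inequality obtained from Lemma~\ref{lem_lem512} at level $n=1$ plus the finite-graph Poincar\'e on $G_1$. Note only that, logically, this first route is slightly circular as a \emph{derivation} of \cite[Lemma 5.13]{Shi24}, since in \cite{Shi24} the H\"older embedding in Theorem~2.21 is itself established via Morrey-type estimates of the kind in Lemma~5.13; within the present paper, however, where both results are imported, your reduction is perfectly valid.

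Your alternative Morrey/telescoping argument is essentially how \cite{Shi24} actually proves the lemma, and your identification of the bridging step (uniformly bounded $G_{n_0}$-path between nearby cells) as the place where the carpet geometry enters is accurate.
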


Fourth, we have the following technical inequality.

\begin{mylem}\label{lem_eqn519}(\cite[Equation (5.19)]{Shi24})
There exists some positive constant $c_{5.19}$ such that
\begin{align*}
&A^{(n)}_{p,\beta_p}(f)\le c_{5.19}\left(\mytilde{\calE}_p^{G_n}(M_nf)+a^{\beta_pn}\sum_{w\in W_n}\int_{K_w}|f-M_nf(w)|^p\md\mu\right)
\end{align*}
for any $n\ge1$, $f\in L^p(K;\mu)$.
\end{mylem}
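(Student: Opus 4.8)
The plan is to partition $K$, up to a $\mu$-null set, into the level-$n$ cells $\{K_w\}_{w\in W_n}$ and to compare $f$ on each cell with its average $M_nf(w)$. For $x\in K_w$ and $y\in K_v$ the elementary inequality
$$|f(x)-f(y)|^p\le 3^{p-1}\left(|f(x)-M_nf(w)|^p+|M_nf(w)-M_nf(v)|^p+|M_nf(v)-f(y)|^p\right),$$
valid for $p\ge1$, splits the double integral defining $A^{(n)}_{p,\beta_p}(f)$ into two \emph{oscillation} contributions (the first and third summands) and one \emph{energy} contribution (the middle summand). I would show that the oscillation contributions are controlled by $a^{\beta_pn}\sum_{w\in W_n}\int_{K_w}|f-M_nf(w)|^p\md\mu$ and the energy contribution by $\mytilde\calE_p^{G_n}(M_nf)$. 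Throughout I use the identities $a^\alpha=N_*$ and $a^{\beta_p}=N_*\rho_p$, so that $a^{(\alpha+\beta_p)n}=N_*^{2n}\rho_p^n$.

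The oscillation contributions are routine. The first summand does not depend on $y$, so integrating it over $y\in B(x,ca^{-n})$ produces the factor $\mu(B(x,ca^{-n}))\le C_{\mathrm{AR}}c^\alpha a^{-\alpha n}$ by $\alpha$-Ahlfors regularity; since $a^{(\alpha+\beta_p)n}a^{-\alpha n}=a^{\beta_pn}$, this yields a constant multiple of $a^{\beta_pn}\sum_{w\in W_n}\int_{K_w}|f-M_nf(w)|^p\md\mu$. The third summand is handled symmetrically: after interchanging the order of integration via Fubini and using $\{x:y\in B(x,ca^{-n})\}=B(y,ca^{-n})$, the same Ahlfors bound gives an identical contribution.

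The energy contribution is the crux. It equals
$$a^{(\alpha+\beta_p)n}\sum_{(w,v)\text{ close}}|M_nf(w)-M_nf(v)|^p\,(\mu\times\mu)\left(\myset{(x,y)\in K_w\times K_v:d(x,y)<ca^{-n}}\right),$$
where I call $(w,v)$ \emph{close} if $d(K_w,K_v)<ca^{-n}$. Bounding each mass by $\mu(K_w)\mu(K_v)=N_*^{-2n}$ and using $a^{(\alpha+\beta_p)n}N_*^{-2n}=\rho_p^n$ reduces the claim to the $n$-uniform estimate
$$\sum_{(w,v)\text{ close}}|M_nf(w)-M_nf(v)|^p\le C\,\calE_p^{G_n}(M_nf),$$
after which multiplication by $\rho_p^n$ produces exactly $\mytilde\calE_p^{G_n}(M_nf)$.

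The hard part is this last estimate, which is where the geometry of the carpet enters. By $\alpha$-Ahlfors regularity the number of cells meeting a ball of radius $ca^{-n}$ is bounded uniformly in $n$, so each cell has boundedly many close partners. The essential input is a \emph{relative connectivity} statement: there is $L=L(D,a)$, independent of $n$, such that every close pair $(w,v)$ can be joined by a chain $w=u_0,u_1,\ldots,u_L=v$ in $W_n$ in which consecutive terms are equal or form an edge of $G_n$. Granting this, the inequality $|M_nf(w)-M_nf(v)|^p\le L^{p-1}\sum_{i=0}^{L-1}|M_nf(u_i)-M_nf(u_{i+1})|^p$ rewrites each close difference in terms of genuine $G_n$-edge differences; since the bounded degree $L_*$ of the graphs (Equation (\ref{eqn_L*})) forces each edge to appear in the chains of only boundedly many close pairs, summation yields the desired bound with $C=C(D,a,p)$. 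Establishing this uniform bounded-graph-distance property for close cells, which follows from the connectedness axioms of Definition \ref{def_GSC} together with Ahlfors regularity, is the main obstacle; combining it with the two oscillation estimates then gives the lemma.
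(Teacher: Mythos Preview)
The paper does not supply its own proof of this lemma; it is quoted from \cite[Equation (5.19)]{Shi24}. Your outline is exactly the natural argument and is essentially what appears there: decompose the double integral over ordered pairs of level-$n$ cells, split $|f(x)-f(y)|^p$ through $M_nf(w)$ and $M_nf(v)$, bound the two oscillation pieces via $\mu(B(x,ca^{-n}))\le C_{\mathrm{AR}}c^\alpha a^{-\alpha n}$, and reduce the middle piece to edge differences in $G_n$.

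You have correctly isolated the one nontrivial step, and your claim about it is right. The bounded-chain property---that any $w,v\in W_n$ with $d(K_w,K_v)<ca^{-n}$ are joined in $G_n$ by a path of length at most $L=L(D,a,c)$---holds on generalized Sierpi\'nski carpets and has a short self-similarity proof: choose $m\ge1$ with $a^m>c$ and let $w',v'\in W_{n-m}$ be the ancestors of $w,v$; then $d(K_{w'},K_{v'})<ca^{-n}<a^{-(n-m)}$, and since $K_{w'},K_{v'}$ are closed axis-aligned cubes of side $a^{-(n-m)}$ with corners on the lattice $a^{-(n-m)}\bbZ^D$, this forces $K_{w'}\cap K_{v'}\ne\emptyset$. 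By Definition \ref{def_GSC} (\ref{def_GSC2}) the level-$n$ cells inside $K_{w'}$ (respectively $K_{v'}$) induce a connected copy of $G_m$ in $G_n$, and the two copies share at least one vertex or edge because $K_{w'}\cap K_{v'}\ne\emptyset$; hence the subgraph of $G_n$ on cells in $K_{w'}\cup K_{v'}$ is connected with at most $2N_*^m$ vertices, giving $L\le 2N_*^m$. With this in hand your chain inequality and bounded-degree bookkeeping go through verbatim, yielding the lemma with $c_{5.19}$ depending only on $D,a,S,p$.
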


We divide the proof of our main result into the following four propositions. The idea is to relate $A^{(n)}_{p,\beta_p}$ to $\mytilde{\calE}_{p}^{G_{n}}(M_n\cdot)$, in some proper sense that each of them can be ``bounded" by the other, such that the weak monotonicity result Lemma \ref{lem_WM} for $\mytilde{\calE}_{p}^{G_{n}}(M_n\cdot)$ can be applied.

The first proposition says that $A^{(n)}_{p,\beta_p}$ can be ``bounded" by $\mytilde{\calE}_{p}^{G_{n}}(M_n\cdot)$.

\begin{myprop}\label{prop1}
There exists some positive constant $C_{\mathrm{P1}}$ such that
$$A^{(n)}_{p,\beta_p}(f)\le C_{\mathrm{P1}}\sup_{l\ge n}\mytilde{\calE}_{p}^{G_{l}}(M_lf)$$
for any $n\ge1$, $f\in\calF_p$.
\end{myprop}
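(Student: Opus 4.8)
The plan is to bound $A^{(n)}_{p,\beta_p}(f)$ using Lemma \ref{lem_eqn519}, which already reduces the problem to controlling the two terms
$$\mytilde{\calE}_p^{G_n}(M_nf)\quad\text{and}\quad a^{\beta_pn}\sum_{w\in W_n}\int_{K_w}|f-M_nf(w)|^p\md\mu.$$
The first of these is manifestly $\le\sup_{l\ge n}\mytilde{\calE}_p^{G_l}(M_lf)$, so the whole content is in estimating the oscillation term by the same quantity. First I would fix $n\ge1$ and, for each $w\in W_n$, decompose $K_w=\bigsqcup_{v\in W_m}K_{wv}$ into its level-$(n+m)$ subcells for some $m\ge1$ to be sent to infinity. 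On each such subcell $M_nf(w)$ should be compared through the telescoping of cell averages $M_{n+m}f(wv),\,M_{n+m-1}f(w\,v^-),\ldots,M_nf(w)$, so that $|f-M_nf(w)|$ on $K_{wv}$ is split as $|f-M_{n+m}f(wv)|$ plus a sum of differences of consecutive-level averages over a chain of nested cells.

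The core estimate is then to bound $a^{\beta_pn}\sum_{w\in W_n}\sum_{v\in W_m}\mu(K_{wv})\,|M_{n+m}f(wv)-M_nf(w)|^p$ by $\sup_{l\ge n}\mytilde{\calE}_p^{G_l}(M_lf)$, uniformly in $m$. Each difference $|M_{k+1}f(u\,i)-M_kf(u)|$ for $u\in W_k$, $i\in S$ is itself an average of first differences $|M_{k+1}f(u\,i)-M_{k+1}f(u\,j)|$ over $j\in S$, and neighboring subcells of $K_u$ are connected in $G_{k+1}$ by paths of bounded length (using connectedness of $G_1$, hence of each $G_n$, and the uniform bound $L_*\le 3^D-1$ on vertex degrees); so each such difference is controlled, via Jensen/Hölder, by the $G_{k+1}$-energy restricted to edges inside $K_u$. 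Summing over the chain of cells with the geometric weights $a^{\beta_p n}\mu(K_{wv})=a^{\beta_p n}N_*^{-(n+m)}=\rho_p^{-n}(N_*\rho_p)^{-m}\cdot(\text{const})$ wait — more precisely $a^{\beta_p n}N_*^{-(n+m)}=(a^{\beta_p}/N_*)^n N_*^{-m}=\rho_p^n N_*^{-m}$, and the $k$-th term in the telescoping chain carries a factor $\rho_p^{-(k-n)}$ from the rescaling $\mytilde\calE_p^{G_k}=\rho_p^k\calE_p^{G_k}$. Since $\rho_p>1$, the resulting series in $k$ from $n$ to $n+m$ converges geometrically, and after reorganizing the double sum so that each edge of $G_k$ is counted a bounded number of times, one obtains a bound by $\sum_{k\ge n}\rho_p^{-(k-n)}\mytilde\calE_p^{G_k}(M_kf)\lesssim\sup_{l\ge n}\mytilde\calE_p^{G_l}(M_lf)$. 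The term $a^{\beta_pn}\sum_{w,v}\mu(K_{wv})|f-M_{n+m}f(wv)|^p$ tends to $0$ as $m\to\infty$ for $f\in\calF_p$ (indeed $f$ is continuous, or one can cite Lemma \ref{lem_lem512} with $n$ replaced by $n+m$ and note $a^{\beta_p(n+m)}\to\infty$ while the sum stays bounded — so after multiplying by $a^{-\beta_p m}$ it vanishes), leaving the desired inequality.

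I expect the main obstacle to be the bookkeeping in the telescoping estimate: choosing the chains of nested cells consistently, controlling how many times each edge of $G_k$ gets reused across different $(w,v)$ pairs (this is where uniform connectedness of $G_1$ and the degree bound $L_*$ must be used quantitatively, probably by fixing once and for all a spanning-tree-like path structure within $K_u$ for each $u$), and checking that the combinatorial multiplicities do not grow with $k$. The convergence of the geometric series is automatic from $\rho_p>1$, and the use of Jensen's inequality to pass from averages to $p$-th powers of first differences is routine; the only genuinely delicate point is to ensure all constants are independent of $n$ and $m$. A clean way to organize this is to prove first an auxiliary lemma of the form: for $k>n$ and $w\in W_n$, $\sum_{v\in W_{k-n}}\mu(K_{wv})|M_kf(wv)-M_nf(w)|^p\lesssim \sum_{j=n}^{k-1}\rho_p^{n-j}N_*^{-j}\cdot(\text{energy of }M_jf\text{ or }M_{j+1}f \text{ on edges meeting }K_w)$, then sum over $w\in W_n$ and over $k$.
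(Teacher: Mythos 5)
Your first step---invoking Lemma \ref{lem_eqn519} and noting that the resulting term $\mytilde{\calE}_p^{G_n}(M_nf)$ is trivially dominated by $\sup_{l\ge n}\mytilde{\calE}_p^{G_l}(M_lf)$---is exactly the paper's first step. Where you diverge is in the oscillation term $a^{\beta_p n}\sum_{w\in W_n}\int_{K_w}|f-M_nf(w)|^p\,\md\mu$: the paper simply quotes Lemma \ref{lem_lem512}, which is precisely the statement that this term is at most $C_{\mathrm{PI}\text{-}\mathrm{KS}}\varliminf_{l\to+\infty}\mytilde{\calE}_p^{G_{n+l}}(M_{n+l}f)\le C_{\mathrm{PI}\text{-}\mathrm{KS}}\sup_{l\ge n}\mytilde{\calE}_p^{G_l}(M_lf)$, and the proof is over in three lines. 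You instead re-derive this Poincar\'e-type bound from scratch by telescoping cell averages along chains of nested cells. That route is viable---it is essentially how such discrete Poincar\'e inequalities are proved in \cite{Shi24} in the first place, and it is the same Jonsson-style mechanism the paper deploys in Proposition \ref{prop2}---and your weight count is essentially right: after multiplying by $a^{\beta_pn}=(N_*\rho_p)^n$, the level-$j$ telescoping term contributes on the order of $(N_*\rho_p)^{n-j}\mytilde{\calE}_p^{G_j}(M_jf)$ (your stated weight $\rho_p^{n-j}N_*^{-j}$ carries a stray $\rho_p^n$), and the series is geometric because $N_*\rho_p=a^{\beta_p}>1$. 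Since you yourself noticed that Lemma \ref{lem_lem512} is available (you cite it to kill the remainder term), the entire construction could have been replaced by that citation.

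One point in your sketch needs repair before it is a proof. When you split $|f-M_nf(w)|$ on $K_{wv}$ into $|f-M_{n+m}f(wv)|$ plus the $m$ consecutive-level differences and pass to $p$-th powers, a plain Jensen/H\"older application produces a factor $(m+1)^{p-1}$, which blows up as $m\to+\infty$ and destroys the estimate. You must either use a weighted H\"older inequality $\bigl(\sum_k|a_k|\bigr)^p\le\bigl(\sum_k s^{-k/(p-1)}\bigr)^{p-1}\sum_k s^k|a_k|^p$ with $1<s<N_*\rho_p$, so that the extra weight $s^k$ is absorbed by the geometric decay $(N_*\rho_p)^{-k}$, or telescope in blocks of $k$ levels with $k$ chosen large enough that $2^{p-1}$ is beaten by the per-block decay and iterate the two-term inequality---this is exactly the role of the integer $k$ in Equation (\ref{eqn_k}) and of the iterated $2^{p-1}$-splitting in the paper's proof of Proposition \ref{prop2}. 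With that fix, and with the edge-reuse multiplicity controlled as you indicate (via connectedness of the children graph of each cell and the degree bound of Equation (\ref{eqn_L*})), your argument closes.
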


\begin{proof}
By Lemma \ref{lem_lem512} and Lemma \ref{lem_eqn519}, we have
\begin{align*}
A^{(n)}_{p,\beta_p}(f)&\le c_{5.19}\left(\mytilde{\calE}_p^{G_n}(M_nf)+a^{\beta_pn}\sum_{w\in W_n}\int_{K_w}|f-M_nf(w)|^p\md\mu\right)\\
&\le c_{5.19}\left(\mytilde{\calE}_p^{G_n}(M_nf)+C_{\mathrm{PI}\text{-}\mathrm{KS}}\varliminf_{l\to+\infty}\mytilde{\calE}_{p}^{G_{n+l}}(M_{n+l}f)\right)\\
&\le \left(c_{5.19}\left(1+C_{\mathrm{PI}\text{-}\mathrm{KS}}\right)\right)\sup_{l\ge n}\mytilde{\calE}_{p}^{G_{l}}(M_lf).
\end{align*}
\end{proof}

The remaining three propositions are to show $\mytilde{\calE}_{p}^{G_{n}}(M_n\cdot)$ can be ``bounded" by $A^{(n)}_{p,\beta_p}$. However, a similar ``bound" to Proposition \ref{prop1} of the form
\begin{equation}\label{eqn_EsupA}
\mytilde{\calE}_{p}^{G_{n}}(M_nf)\le C\sup_{l\ge n}A^{(l)}_{p,\beta_p}(f),
\end{equation}
see \cite[Equation (5.20)]{Shi24}, is not adequate to prove our main result, hence some improvements are essentially needed.

The second proposition can be regarded as some improvement of \cite[Lemma 5.14]{Shi24}. Although it was mentioned at the beginning of its proof that its method was from \cite{GY19} by Grigor'yan and the author, the original idea was indeed from Jonsson \cite{Jon96} in 1996. Here we use an idea recently given by Gao, Yu and Zhang \cite{GYZ23} to do the improvement. Roughly speaking, the idea from \cite{GYZ23} is simply to divide integral in a ball into a summation of integrals over annuli.

For the simplicity of notation, for $n\ge1$, $f\in L^p(K;\mu)$, denote
$$A_n(f)=\int_K\int_{B(x,ca^{-n})\backslash B(x,ca^{-(n+1)})}|f(x)-f(y)|^p\md\mu(y)\md\mu(x),$$
then
$$A^{(n)}_{p,\beta_p}(f)=a^{(\alpha+\beta_p)n}\sum_{j=0}^\infty A_{n+j}(f)\ge a^{(\alpha+\beta_p)n}A_n(f).$$

Let $k\ge1$ be a fixed integer satisfying
\begin{equation}\label{eqn_k}
2^{p-1}<a^{(\beta_p-\alpha)k}.
\end{equation}
For $p>\mathrm{dim}_{\mathrm{ARC}}(K,d)\ge1$, since $\beta_p>\alpha$, such an integer $k\ge1$ always exists.

\begin{myprop}\label{prop2}
There exists some positive constant $C_{\mathrm{P2}}$ such that
$$a^{{\beta_p}n}\sum_{w\in W_n}\int_{K_w}|f-M_nf(w)|^p\md\mu\le C_{\mathrm{P2}}a^{(\alpha+\beta_p)n}\sum_{j=0}^\infty\left(2^{\frac{p-1}{k}}a^{2\alpha}\right)^jA_{n+j}(f)$$
for any $n\ge1$, $f\in\calF_p$.
\end{myprop}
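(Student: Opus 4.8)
The plan is to estimate the oscillation $\int_{K_w}|f-M_nf(w)|^p\,\md\mu$ cell-by-cell using a telescoping chain argument, exploiting the $p$-energy control implicit in $A_n(f)$. First I would fix $w\in W_n$ and $x\in K_w$. Using the self-similar structure, I would write $\{x\}=K_{wv}$ for some $v\in W_\infty$ and telescope along the descending chain of cells $K_w\supseteq K_{wv_1}\supseteq K_{wv_1v_2}\supseteq\cdots$, so that
$$|M_nf(w)-f(x)|\le\sum_{m=0}^\infty|M_{n+m}f(wv_1\cdots v_m)-M_{n+m+1}f(wv_1\cdots v_{m+1})|.$$
The step from level $n+m$ to level $n+m+1$ involves two adjacent (indeed nested) cells, and the difference of averages $|M_{n+m}f(\cdot)-M_{n+m+1}f(\cdot)|$ should be controlled by a local integral of $|f(\xi)-f(\eta)|^p$ over pairs $\xi,\eta$ both lying within $O(a^{-(n+m)})$ of each other — i.e. essentially by (a local piece of) $A_{n+m}(f)$. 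The telescoping introduces a factor depending on how one distributes the sum, and this is exactly where the geometric weight $(2^{(p-1)/k}a^{2\alpha})^j$ will appear: one uses the elementary inequality $|\sum a_m|^p\le(\sum \lambda_m^{-p'/p}\cdot\text{something})$-type splitting, or more precisely a weighted Hölder/discrete-$\ell^p$ inequality with weights chosen so that the resulting series converges against the decay coming from the a priori Hölder regularity.

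The key mechanism making the weights $(2^{(p-1)/k}a^{2\alpha})^j$ appear — rather than a cruder $a^{(\text{something big})j}$ — is the grouping of the telescoping sum into blocks of length $k$, following the Gao–Yu–Zhang idea of passing to annuli. Concretely, I expect one groups the chain into blocks $[n+jk,n+(j+1)k)$ and uses $2^{p-1}<a^{(\beta_p-\alpha)k}$ (Equation~\eqref{eqn_k}) so that the factor $2^{p-1}$ picked up per block from the convexity inequality $(u+v)^p\le 2^{p-1}(u^p+v^p)$ is beaten by the Hölder-regularity decay $a^{-(\beta_p-\alpha)k}$ per block available from Lemma~\ref{lem_lem513}. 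The $a^{2\alpha}$ factor should come from the volume normalization: passing from an integral of $|f(\xi)-f(\eta)|^p$ over $K_{n+m}$-cells to the normalized quantity $A_{n+m}(f)$ costs powers of $\mu(K_{n+m})^{-1}=N_*^{n+m}=a^{\alpha(n+m)}$, and doing this on both the $\xi$ and $\eta$ sides (or on cell and neighbor) gives the square. After bounding $|M_nf(w)-f(x)|^p$ pointwise in $x$ by such a weighted series, I would integrate over $x\in K_w$, sum over $w\in W_n$, multiply by $a^{\beta_p n}$, and collect terms: the cell-sums over $W_n$ of the local pieces of $A_{n+j}(f)$ reassemble (up to the bounded overlap constant $L_*$ from Equation~\eqref{eqn_L*}) into the global $A_{n+j}(f)$, and the powers of $a$ bookkeep into the claimed constant.

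The main obstacle I anticipate is the bookkeeping of the exponents — verifying that after the block-grouping, the per-block gain $a^{(\beta_p-\alpha)k}/2^{p-1}>1$ exactly cancels the loss, leaving precisely the weight $2^{(p-1)/k}a^{2\alpha}$ per single level (note $(2^{p-1})^{1/k}=2^{(p-1)/k}$, consistent with distributing one block's $2^{p-1}$ over its $k$ levels). A secondary subtlety is that the telescoping requires choosing, for each $x\in K_w$, an address $v\in W_\infty$ with $\{x\}=K_{wv}$; since this address need not be unique on cell boundaries, I would fix a measurable selection (measure-theoretically this is harmless since the boundary set is $\mu$-null, or one can simply work with one fixed choice). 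One must also ensure that the difference $|M_{m}f(u)-M_{m+1}f(u')|$ for a cell $K_{u'}\subset K_u$ is genuinely controlled by $A_m(f)$-type data and not by something at a coarser scale; this should follow from a Poincaré-type/Jensen estimate: $|M_mf(u)-M_{m+1}f(u')|^p\le\dashint_{K_u}\dashint_{K_{u'}}|f(\xi)-f(\eta)|^p\,\md\mu(\eta)\,\md\mu(\xi)$, and since $K_{u'}\subset K_u$ both sit in a ball of radius $O(a^{-m})$ by Equation~\eqref{eqn_smallc}, the integrand is supported on pairs with $d(\xi,\eta)<ca^{-m}$, landing us inside the integrand defining $A_m(f)$ after summing over the relevant annuli $A_{m},A_{m+1},\dots$ — which is precisely why the full weighted tail $\sum_{j}(\cdots)^jA_{n+j}(f)$, and not just a single term, shows up on the right-hand side.
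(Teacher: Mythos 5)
Your skeleton is essentially the paper's: fix a descending chain of cells inside $K_w$, telescope in blocks of $k$ levels, apply the convexity inequality $(u+v)^p\le 2^{p-1}(u^p+v^p)$ once per block, average over each cell of the chain by Jensen, pay $\mu(K_u)^{-1}\mu(K_{u'})^{-1}\approx a^{2\alpha m}$ per step, and reassemble the resulting double integrals into $\sum_j(2^{(p-1)/k}a^{2\alpha})^jA_{n+j}(f)$ after reordering the summation. Two remarks. First, the paper sidesteps your measurable-selection worry entirely: since $f\in\calF_p$ is continuous, one picks a single point $x_w\in K_w$ with $f(x_w)=M_nf(w)$ and builds \emph{one} chain $w=w^{(0)},w^{(1)}\in W_{n+k},\ldots,w^{(l)}\in W_{n+kl}$ shrinking to $x_w$; the integration variable only appears as auxiliary points $x^{(i)}\in K_{w^{(i)}}$ that are averaged out. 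Second, a caution on your first display: if you telescope level-by-level ($M_{n+m}\to M_{n+m+1}$) and then apply the dyadic convexity inequality, you pick up $2^{(p-1)m}$ per level, not $2^{(p-1)m/k}$; you must either telescope directly in jumps of $k$ levels (as the paper does) or replace the convexity inequality by a weighted H\"older inequality with geometric weights tuned to give $2^{(p-1)/k}$ per level. Both work, but the grouping has to happen \emph{before} the convexity step.

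The one genuine misconception is your account of where Equation \eqref{eqn_k} and Lemma \ref{lem_lem513} enter. There is no per-block cancellation between $2^{p-1}$ and a H\"older decay $a^{-(\beta_p-\alpha)k}$: the weight $2^{(p-1)/k}a^{2\alpha}$ is strictly greater than $1$, the right-hand side of the proposition is allowed to diverge, and Equation \eqref{eqn_k} is not used in this proof at all --- it is needed only in Proposition \ref{prop4}, where one trades $A_{n+j}(f)\le a^{-(\alpha+\beta_p)(n+j)}A^{(n+j)}_{p,\beta_p}(f)$ to turn the weight into $2^{(p-1)/k}a^{-(\beta_p-\alpha)}<1$. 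Lemma \ref{lem_lem513} is used exactly once here, to show that the single remainder term $2^{p-1}\dashint_{K_{w^{(l)}}}|f(x^{(l)})-f(x_w)|^p\md\mu(x^{(l)})\lesssim a^{-(\beta_p-\alpha)(n+kl)}\sup_{m\ge1}A^{(m)}_{p,\beta_p}(f)$ vanishes as the chain length $l\to+\infty$ (using only $\beta_p>\alpha$ and $f\in\calF_p$). If you instead applied the H\"older estimate inside each block, you would bound everything by $\sup_m A^{(m)}_{p,\beta_p}(f)$, which is useless for the weak monotonicity argument. With that correction, your plan goes through.
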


\begin{proof}
Let $l\ge1$ be an integer. Fix $w\in W_n$, since $f\in\calF_p$ is continuous, there exists $x_w\in K_w$ such that $f(x_w)=M_nf(w)$, there exist $w_{n+1},w_{n+2},\ldots\in S$ such that $\{x_w\}=K_{ww_{n+1}w_{n+2}\ldots}$. Let
\begin{align*}
w^{(0)}&=w\in W_n,\\
w^{(1)}&=ww_{n+1}\ldots w_{n+k}\in W_{n+k},\\
w^{(2)}&=ww_{n+1}\ldots w_{n+k}w_{n+k+1}\ldots w_{n+2k}\in W_{n+2k},\\
&\ldots\\
w^{(l)}&=ww_{n+1}\ldots w_{n+k}w_{n+k+1}\ldots w_{n+2k}\ldots w_{n+k(l-1)+1}\ldots w_{n+kl}\in W_{n+kl}.
\end{align*}
For any $i=0,1,\ldots,l$, for any $x^{(i)}\in K_{w^{(i)}}$, by the H\"older inequality, we have
\begin{align*}\\
&|f(x^{(0)})-f(x_w)|^p\le2^{p-1}|f(x^{(0)})-f(x^{(l)})|^p+2^{p-1}|f(x^{(l)})-f(x_w)|^p\\
&\le2^{p-1}\left(2^{p-1}|f(x^{(0)})-f(x^{(1)})|^p+2^{p-1}|f(x^{(1)})-f(x^{(l)})|^p\right)+2^{p-1}|f(x^{(l)})-f(x_w)|^p\\
&=2^{2(p-1)}|f(x^{(0)})-f(x^{(1)})|^p+2^{2(p-1)}|f(x^{(1)})-f(x^{(l)})|^p+2^{p-1}|f(x^{(l)})-f(x_w)|^p\\
&\le2^{2(p-1)}|f(x^{(0)})-f(x^{(1)})|^p+2^{2(p-1)}\left(2^{p-1}|f(x^{(1)})-f(x^{(2)})|^p+2^{p-1}|f(x^{(2)})-f(x^{(l)})|^p\right)\\
&\quad+2^{p-1}|f(x^{(l)})-f(x_w)|^p\\
&=2^{2(p-1)}|f(x^{(0)})-f(x^{(1)})|^p+2^{3(p-1)}|f(x^{(1)})-f(x^{(2)})|^p+2^{3(p-1)}|f(x^{(2)})-f(x^{(l)})|^p\\
&\quad+2^{p-1}|f(x^{(l)})-f(x_w)|^p\\
&\le\ldots\le2^{2(p-1)}\sum_{i=0}^{l-1}2^{(p-1)i}|f(x^{(i)})-f(x^{(i+1)})|^p+2^{p-1}|f(x^{(l)})-f(x_w)|^p.
\end{align*}
Taking integration with respect to $x^{(0)}\in K_{w^{(0)}}$, $x^{(1)}\in K_{w^{(1)}}$, \ldots, $x^{(l)}\in K_{w^{(l)}}$ and dividing by $\mu(K_{w^{(0)}})$, $\mu(K_{w^{(1)}})$, \ldots, $\mu(K_{w^{(l)}})$, we have
\begin{align*}
&\frac{1}{\mu(K_w)}\int_{K_w}|f-M_nf(w)|^p\md\mu=\dashint_{K_{w^{(0)}}}|f(x^{(0)})-f(x_w)|^p\md\mu(x^{(0)})\\
&\le2^{2(p-1)}\sum_{i=0}^{l-1}2^{(p-1)i}\dashint_{K_{w^{(i)}}}\dashint_{K_{w^{(i+1)}}}|f(x^{(i)})-f(x^{(i+1)})|^p\md\mu(x^{(i+1)})\md\mu(x^{(i)})\\
&\quad+2^{p-1}\dashint_{K_{w^{(l)}}}|f(x^{(l)})-f(x_w)|^p\md\mu(x^{(l)}).
\end{align*}
For the first term, since $K_{w^{(i+1)}}\subseteq K_{w^{(i)}}$, by the choice Equation (\ref{eqn_smallc}) of the constant $c$, we have
\begin{align*}
&\dashint_{K_{w^{(i)}}}\dashint_{K_{w^{(i+1)}}}|f(x^{(i)})-f(x^{(i+1)})|^p\md\mu(x^{(i+1)})\md\mu(x^{(i)})\\
&\le\frac{1}{\mu(K_{w^{(i)}})\mu(K_{w^{(i+1)}})}\int_{K_{w^{(i)}}}\int_{B(x^{(i)},ca^{-(n+ki)})}|f(x^{(i)})-f(x^{(i+1)})|^p\md\mu(x^{(i+1)})\md\mu(x^{(i)})\\
&=a^{\alpha k}a^{2\alpha(n+ki)}\int_{K_{w^{(i)}}}\int_{B(x^{(i)},ca^{-(n+ki)})}|f(x^{(i)})-f(x^{(i+1)})|^p\md\mu(x^{(i+1)})\md\mu(x^{(i)}).
\end{align*}
For the second term, since $x^{(l)},x_w\in K_{w^{(l)}}$, by the choice Equation (\ref{eqn_smallc}) of the constant $c$ again, and by Lemma \ref{lem_lem513}, we have
\begin{align*}
&\dashint_{K_{w^{(l)}}}|f(x^{(l)})-f(x_w)|^p\md\mu(x^{(l)})\le\dashint_{K_{w^{(l)}}}\left(C_{5.13}d(x^{(l)},x_w)^{\beta_p-\alpha}\sup_{n\ge1}A^{(n)}_{p,\beta_p}(f)\right)\md\mu(x^{(l)})\\
&\le C_{5.13}(ca^{-(n+kl)})^{\beta_p-\alpha}\sup_{n\ge1}A^{(n)}_{p,\beta_p}(f)=\left(C_{5.13}c^{\beta_p-\alpha}\right)a^{-(\beta_p-\alpha)(n+kl)}\sup_{n\ge1}A^{(n)}_{p,\beta_p}(f).
\end{align*}
Hence
\begin{align*}
&\frac{1}{\mu(K_w)}\int_{K_w}|f-M_nf(w)|^p\md\mu\\
&\le2^{2(p-1)}\sum_{i=0}^{l-1}2^{(p-1)i}a^{\alpha k}a^{2\alpha(n+ki)}\\
&\quad\cdot\int_{K_{w^{(i)}}}\int_{B(x^{(i)},ca^{-(n+ki)})}|f(x^{(i)})-f(x^{(i+1)})|^p\md\mu(x^{(i+1)})\md\mu(x^{(i)})\\
&\quad+2^{p-1}\left(C_{5.13}c^{\beta_p-\alpha}\right)a^{-(\beta_p-\alpha)(n+kl)}\sup_{n\ge1}A^{(n)}_{p,\beta_p}(f),
\end{align*}
hence
\begin{align*}
&\int_{K_w}|f-M_nf(w)|^p\md\mu\\
&\le2^{2(p-1)}\sum_{i=0}^{l-1}2^{(p-1)i}a^{\alpha k}a^{\alpha n}a^{2\alpha ki}\\
&\quad\cdot\int_{K_{w^{(i)}}}\int_{B(x^{(i)},ca^{-(n+ki)})}|f(x^{(i)})-f(x^{(i+1)})|^p\md\mu(x^{(i+1)})\md\mu(x^{(i)})\\
&\quad+\left(2^{p-1}C_{5.13}c^{\beta_p-\alpha}\right)a^{-\alpha n}a^{-(\beta_p-\alpha)(n+kl)}\sup_{n\ge1}A^{(n)}_{p,\beta_p}(f),
\end{align*}
hence
\begin{align*}
&\sum_{w\in W_n}\int_{K_w}|f-M_nf(w)|^p\md\mu\\
&\le2^{2(p-1)}\sum_{i=0}^{l-1}2^{(p-1)i}a^{\alpha k}a^{\alpha n}a^{2\alpha ki}\\
&\quad\cdot\sum_{w\in W_n}\int_{K_{w^{(i)}}}\int_{B(x^{(i)},ca^{-(n+ki)})}|f(x^{(i)})-f(x^{(i+1)})|^p\md\mu(x^{(i+1)})\md\mu(x^{(i)})\\
&\quad+\left(2^{p-1}C_{5.13}c^{\beta_p-\alpha}\right)a^{-(\beta_p-\alpha)(n+kl)}\sup_{n\ge1}A^{(n)}_{p,\beta_p}(f)\\
&\le2^{2(p-1)}\sum_{i=0}^{l-1}2^{(p-1)i}a^{\alpha k}a^{\alpha n}a^{2\alpha ki}\int_{K}\int_{B(x,ca^{-(n+ki)})}|f(x)-f(y)|^p\md\mu(y)\md\mu(x)\\
&\quad+\left(2^{p-1}C_{5.13}c^{\beta_p-\alpha}\right)a^{-(\beta_p-\alpha)(n+kl)}\sup_{n\ge1}A^{(n)}_{p,\beta_p}(f)\\
&\le2^{2(p-1)}a^{\alpha k}a^{\alpha n}\sum_{i=0}^{\infty}2^{(p-1)i}a^{2\alpha ki}\sum_{j=n+ki}^\infty A_j(f)\\
&\quad+\left(2^{p-1}C_{5.13}c^{\beta_p-\alpha}\right)a^{-(\beta_p-\alpha)(n+kl)}\sup_{n\ge1}A^{(n)}_{p,\beta_p}(f).
\end{align*}
For the first term, by changing the order of summations, we have
\begin{align*}
&\sum_{i=0}^{\infty}2^{(p-1)i}a^{2\alpha ki}\sum_{j=n+ki}^\infty A_j(f)=\sum_{j=n}^{\infty}\left(\sum_{i=0}^{\left[\frac{j-n}{k}\right]}2^{(p-1)i}a^{2\alpha ki}\right)A_j(f)\\
&\le\frac{1}{2^{p-1}a^{2\alpha k}-1}\sum_{j=n}^\infty\left(2^{p-1}a^{2\alpha k}\right)^{\left[\frac{j-n}{k}\right]+1}A_j(f)\le\frac{2^{p-1}a^{2\alpha k}}{2^{p-1}a^{2\alpha k}-1}\sum_{j=n}^\infty\left(2^{p-1}a^{2\alpha k}\right)^{\frac{j-n}{k}}A_j(f)\\
&=\frac{2^{p-1}a^{2\alpha k}}{2^{p-1}a^{2\alpha k}-1}\sum_{j=n}^\infty\left(2^{\frac{p-1}{k}}a^{2\alpha}\right)^{j-n}A_j(f)=\frac{2^{p-1}a^{2\alpha k}}{2^{p-1}a^{2\alpha k}-1}\sum_{j=0}^\infty\left(2^{\frac{p-1}{k}}a^{2\alpha}\right)^{j}A_{n+j}(f),
\end{align*}
where $[x]$ is the greatest integer less than or equal to $x\in\R$. For the second term, since $\beta_p>\alpha$, letting $l\to+\infty$, we have $a^{-(\beta_p-\alpha)(n+kl)}\to0$. Since $f\in\calF_p$, we have $\sup_{n\ge1}A^{(n)}_{p,\beta_p}(f)<+\infty$. Hence the second term tends to 0 as $l\to+\infty$. Hence letting $l\to+\infty$, we have
\begin{align*}
&\sum_{w\in W_n}\int_{K_w}|f-M_nf(w)|^p\md\mu\\
&\le2^{2(p-1)}a^{\alpha k}a^{\alpha n}\frac{2^{p-1}a^{2\alpha k}}{2^{p-1}a^{2\alpha k}-1}\sum_{j=0}^\infty\left(2^{\frac{p-1}{k}}a^{2\alpha}\right)^{j}A_{n+j}(f)\\
&=\left(\frac{2^{3(p-1)}a^{3\alpha k}}{2^{p-1}a^{2\alpha k}-1}\right)a^{\alpha n}\sum_{j=0}^\infty\left(2^{\frac{p-1}{k}}a^{2\alpha}\right)^{j}A_{n+j}(f),
\end{align*}
which is our desired result.
\end{proof}

Instead of Equation (\ref{eqn_EsupA}), $\mytilde{\calE}_p^{G_n}(M_n\cdot)$ can be ``bounded" by $A_n$ as follows.

\begin{myprop}\label{prop3}
There exists some positive constant $C_{\mathrm{P3}}$ such that
$$\mytilde{\calE}_p^{G_n}(M_nf)\le C_{\mathrm{P3}}a^{(\alpha+\beta_p)n}\sum_{j=0}^\infty\left(2^{\frac{p-1}{k}}a^{2\alpha}\right)^{j}A_{n+j}(f)$$
for any $n\ge1$, $f\in\calF_p$.
\end{myprop}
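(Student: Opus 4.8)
The plan is to bound $\mytilde{\calE}_p^{G_n}(M_nf)$ directly by a constant multiple of $A^{(n)}_{p,\beta_p}(f)$ itself — which is in fact stronger than the stated inequality — and then to read off the weighted form for free. The only real input is Jensen's inequality applied cell by cell, together with the elementary fact \eqref{eqn_smallc} that edge-adjacent level-$n$ cells sit inside a ball of radius $ca^{-n}$.

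First I would fix an edge $(w,v)\in E_n$ and write $M_nf(w)-M_nf(v)=\dashint_{K_w}\dashint_{K_v}(f(x)-f(y))\,\md\mu(y)\,\md\mu(x)$ by inserting trivial averages over the other cell. Since $t\mapsto|t|^p$ is convex (recall $p>\mathrm{dim}_{\mathrm{ARC}}(K,d)\ge1$), Jensen's inequality together with $\mu(K_w)=\mu(K_v)=N_*^{-n}=a^{-\alpha n}$ gives $|M_nf(w)-M_nf(v)|^p\le a^{2\alpha n}\int_{K_w}\int_{K_v}|f(x)-f(y)|^p\,\md\mu(y)\,\md\mu(x)$. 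Summing over $E_n$ and using \eqref{eqn_smallc} (so that $K_v\subseteq B(x,ca^{-n})$ for $x\in K_w$ whenever $(w,v)\in E_n$), together with the bound $L_*$ on the number of neighbours of any $w\in W_n$, I obtain $\calE_p^{G_n}(M_nf)\le L_*\,a^{2\alpha n}\int_K\int_{B(x,ca^{-n})}|f(x)-f(y)|^p\,\md\mu(y)\,\md\mu(x)$.

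Then I would multiply through by $\rho_p^n$ and use $\rho_p=a^{\beta_p-\alpha}$ (which is just $\beta_p=\log(N_*\rho_p)/\log a$ rewritten, with $N_*=a^\alpha$), so that $\rho_p^n a^{2\alpha n}=a^{(\alpha+\beta_p)n}$; comparing with the definition $A^{(n)}_{p,\beta_p}(f)=a^{(\alpha+\beta_p)n}\int_K\int_{B(x,ca^{-n})}|f(x)-f(y)|^p\,\md\mu(y)\,\md\mu(x)$ yields $\mytilde{\calE}_p^{G_n}(M_nf)\le L_*A^{(n)}_{p,\beta_p}(f)$. Finally, since $A^{(n)}_{p,\beta_p}(f)=a^{(\alpha+\beta_p)n}\sum_{j\ge0}A_{n+j}(f)$ and the weight $2^{(p-1)/k}a^{2\alpha}$ is clearly $\ge1$, this gives the claimed inequality with $C_{\mathrm{P3}}=L_*$. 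I expect no genuine obstacle: the only thing to watch is the bookkeeping of the powers of $a$, and the weighted form $\sum_j(2^{(p-1)/k}a^{2\alpha})^jA_{n+j}(f)$ is stated here (rather than simply $A^{(n)}_{p,\beta_p}(f)$) only so that Proposition~\ref{prop3} can be added term by term to Proposition~\ref{prop2} in the step that follows.
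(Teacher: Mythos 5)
Your argument is correct, and it takes a genuinely different --- and in fact stronger --- route than the paper's. The paper inserts $f(x)$ and $f(y)$ between the two cell averages and uses $|u+v+w|^p\le3^{p-1}(|u|^p+|v|^p+|w|^p)$, which produces two deviation terms $\dashint_{K_w}|f-M_nf(w)|^p\,\md\mu$; these are then controlled by Proposition \ref{prop2}, and it is exactly that step which forces the weighted annulus sum $\sum_{j}\bigl(2^{(p-1)/k}a^{2\alpha}\bigr)^{j}A_{n+j}(f)$ and the hypothesis $f\in\calF_p$ into the statement. You instead write $M_nf(w)-M_nf(v)$ as a single double average and apply Jensen, which eliminates the deviation terms at the source; combined with the containment $K_v\subseteq B(x,ca^{-n})$ for $x\in K_w$, $(w,v)\in E_n$ guaranteed by Equation (\ref{eqn_smallc}), the multiplicity bound from Equation (\ref{eqn_L*}), and the power count $\rho_p^na^{2\alpha n}=a^{(\beta_p-\alpha)n}a^{2\alpha n}=a^{(\alpha+\beta_p)n}$, this gives the single-scale estimate $\mytilde{\calE}_p^{G_n}(M_nf)\le L_*A^{(n)}_{p,\beta_p}(f)$ for every $f\in L^p(K;\mu)$, which implies the proposition with $C_{\mathrm{P3}}=L_*$ since the weights are at least $1$. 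It is worth noting how much this buys: taking $\varliminf_{n\to+\infty}$ on both sides yields Proposition \ref{prop4} at once with $C_{\mathrm{P4}}=L_*$, so the choice of $k$ in Equation (\ref{eqn_k}), the geometric-series argument of Proposition \ref{prop4}, and Proposition \ref{prop2} itself become unnecessary for Theorem \ref{thm_main}, which would then follow from Proposition \ref{prop1}, Lemma \ref{lem_WM} and your one-line estimate. This does not conflict with the paper's remark that Equation (\ref{eqn_EsupA}) is inadequate: your right-hand side is $A^{(n)}_{p,\beta_p}(f)$ at the single scale $n$ rather than $\sup_{l\ge n}A^{(l)}_{p,\beta_p}(f)$, and the absence of the supremum is precisely what lets the liminf pass through. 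Since your shortcut bypasses a visible part of the paper's architecture, do write out the Jensen step and the containment explicitly when turning the plan into a proof, but I see no gap.
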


\begin{proof}
Recall that
$$\mytilde{\calE}_p^{G_n}(M_nf)=\rho_p^n\calE_p^{G_n}(M_nf)=a^{(\beta_p-\alpha)n}\sum_{(w,v)\in E_n}|M_nf(w)-M_nf(v)|^p.$$
For any $(w,v)\in E_n$, for any $x\in K_w$, $y\in K_v$, by the H\"older inequality, we have
$$|M_nf(w)-M_nf(v)|^p\le3^{p-1}\left(|M_nf(w)-f(x)|^p+|f(x)-f(y)|^p+|M_nf(v)-f(y)|^p\right).$$
Taking integration with respect to $x\in K_w$, $y\in K_v$ and dividing by $\mu(K_w)$, $\mu(K_v)$, by the choice Equation (\ref{eqn_smallc}) of the constant $c$, we have
\begin{align*}
&|M_nf(w)-M_nf(v)|^p\\
&\le3^{p-1}\left(\dashint_{K_w}|f-M_nf(w)|^p\md\mu+\dashint_{K_w}\dashint_{K_v}|f(x)-f(y)|^p\md\mu(y)\md\mu(x)\right.\\
&\quad\left.+\dashint_{K_v}|f-M_nf(v)|^p\md\mu\right)\\
&\le3^{p-1}\left(a^{\alpha n}\int_{K_w}|f-M_nf(w)|^p\md\mu+a^{2\alpha n}\int_{K_w}\int_{B(x,ca^{-n})}|f(x)-f(y)|^p\md\mu(y)\md\mu(x)\right.\\
&\quad\left.+a^{\alpha n}\int_{K_v}|f-M_nf(v)|^p\md\mu\right).
\end{align*}
Taking summation with respect to $(w,v)\in E_n$, by the choice Equation (\ref{eqn_L*}) of the constant $L_*$, we have
\begin{align*}
&\sum_{(w,v)\in E_n}|M_nf(w)-M_nf(v)|^p\\
&\le3^{p-1}\left(2a^{\alpha n}\sum_{w\in W_n}\int_{K_w}|f-M_nf(w)|^p\md\mu\right.\\
&\quad\left.+L_*a^{2\alpha n}\int_K\int_{B(x,ca^{-n})}|f(x)-f(y)|^p\md\mu(y)\md\mu(x)\right)\\
&=3^{p-1}\left(2a^{\alpha n}\sum_{w\in W_n}\int_{K_w}|f-M_nf(w)|^p\md\mu+L_*a^{2\alpha n}\sum_{j=0}^\infty A_{n+j}(f)\right).
\end{align*}
By Proposition \ref{prop2}, we have
\begin{align*}
&\mytilde{\calE}_p^{G_n}(M_nf)\\
&\le3^{p-1}\left(2a^{\beta_p n}\sum_{w\in W_n}\int_{K_w}|f-M_nf(w)|^p\md\mu+L_*a^{(\alpha+\beta_p)n}\sum_{j=0}^\infty A_{n+j}(f)\right)\\
&\le3^{p-1}\left(2C_{\mathrm{P2}}a^{(\alpha+\beta_p)n}\sum_{j=0}^\infty\left(2^{\frac{p-1}{k}}a^{2\alpha}\right)^jA_{n+j}(f)+L_*a^{(\alpha+\beta_p)n}\sum_{j=0}^\infty A_{n+j}(f)\right)\\
&\le\left(3^{p-1}(2C_{\mathrm{P2}}+L_*)\right)a^{(\alpha+\beta_p)n}\sum_{j=0}^\infty\left(2^{\frac{p-1}{k}}a^{2\alpha}\right)^jA_{n+j}(f).
\end{align*}
\end{proof}

With the help of $A_n$, $\mytilde{\calE}_p^{G_n}(M_n\cdot)$ can be ``bounded" by $A^{(n)}_{p,\beta_p}$ in the following result.

\begin{myprop}\label{prop4}
There exists some positive constant $C_{\mathrm{P4}}$ such that
$$\varliminf_{n\to+\infty}\mytilde{\calE}_p^{G_n}(M_nf)\le C_{\mathrm{P4}}\varliminf_{n\to+\infty}A^{(n)}_{p,\beta_p}(f)$$
for any $f\in\calF_p$.
\end{myprop}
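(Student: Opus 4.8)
The plan is to combine Proposition \ref{prop3} with the elementary bound $A^{(m)}_{p,\beta_p}(f)\ge a^{(\alpha+\beta_p)m}A_m(f)$ to rewrite the right-hand side of Proposition \ref{prop3} as a geometrically weighted sum of the quantities $A^{(n+j)}_{p,\beta_p}(f)$, and then pass to $\varliminf_{n\to+\infty}$. Concretely, starting from Proposition \ref{prop3} and writing $a^{(\alpha+\beta_p)n}=a^{-(\alpha+\beta_p)j}a^{(\alpha+\beta_p)(n+j)}$ for each $j\ge0$, the inequality $a^{(\alpha+\beta_p)(n+j)}A_{n+j}(f)\le A^{(n+j)}_{p,\beta_p}(f)$ yields
$$\mytilde{\calE}_p^{G_n}(M_nf)\le C_{\mathrm{P3}}\sum_{j=0}^\infty\gamma^{j}A^{(n+j)}_{p,\beta_p}(f),\qquad\gamma:=2^{\frac{p-1}{k}}a^{2\alpha}\cdot a^{-(\alpha+\beta_p)}=2^{\frac{p-1}{k}}a^{-(\beta_p-\alpha)},$$
valid for all $n\ge1$ and $f\in\calF_p$.

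The crucial observation is that $\gamma<1$: by the defining inequality (\ref{eqn_k}) for $k$ we have $2^{p-1}<a^{(\beta_p-\alpha)k}$, hence $2^{(p-1)/k}<a^{\beta_p-\alpha}$, i.e. $\gamma<1$. Since $f\in\calF_p$ we also have $\sup_{m\ge1}A^{(m)}_{p,\beta_p}(f)<+\infty$, so the series converges. Now set $L:=\varliminf_{n\to+\infty}A^{(n)}_{p,\beta_p}(f)<+\infty$ and fix $\veps>0$; choose $N\ge1$ with $A^{(m)}_{p,\beta_p}(f)\le L+\veps$ for all $m\ge N$. Then for every $n\ge N$,
$$\mytilde{\calE}_p^{G_n}(M_nf)\le C_{\mathrm{P3}}(L+\veps)\sum_{j=0}^\infty\gamma^{j}=\frac{C_{\mathrm{P3}}}{1-\gamma}(L+\veps),$$
so $\varliminf_{n\to+\infty}\mytilde{\calE}_p^{G_n}(M_nf)\le\frac{C_{\mathrm{P3}}}{1-\gamma}(L+\veps)$, and letting $\veps\downarrow0$ gives the claim with $C_{\mathrm{P4}}=C_{\mathrm{P3}}/(1-\gamma)$.

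There is no serious obstacle in this proposition: the only point that genuinely must be checked is that the geometric factor $\gamma$ obtained after converting each $A_{n+j}(f)$ back into $A^{(n+j)}_{p,\beta_p}(f)$ is strictly less than $1$ — which is exactly why $k$ was prescribed in (\ref{eqn_k}) — together with the finiteness of $\sup_m A^{(m)}_{p,\beta_p}(f)$ for $f\in\calF_p$, which makes the tails of the series uniformly small and legitimizes taking the limit inferior term by term. Combined with Proposition \ref{prop1}, Proposition \ref{prop3} and Lemma \ref{lem_WM}, this will close the loop between $A^{(n)}_{p,\beta_p}$ and $\mytilde{\calE}_p^{G_n}(M_n\cdot)$ and complete the proof of Theorem \ref{thm_main}.
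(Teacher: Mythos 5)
Your reduction of Proposition \ref{prop3} to the inequality
$$\mytilde{\calE}_p^{G_n}(M_nf)\le C_{\mathrm{P3}}\sum_{j=0}^\infty\gamma^{j}A^{(n+j)}_{p,\beta_p}(f),\qquad\gamma=2^{\frac{p-1}{k}}a^{-(\beta_p-\alpha)}\in(0,1),$$
is correct, and it coincides with how the paper handles the tail of the sum. The gap is in the limit step: from $L=\varliminf_{n\to+\infty}A^{(n)}_{p,\beta_p}(f)$ you cannot ``choose $N$ with $A^{(m)}_{p,\beta_p}(f)\le L+\veps$ for all $m\ge N$'' --- that is the defining property of the \emph{limit superior}, not the limit inferior. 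The liminf only guarantees $A^{(m)}_{p,\beta_p}(f)\le L+\veps$ along some subsequence $m=n_i$, and restricting to $n=n_i$ does not rescue the argument, because the right-hand side involves \emph{all} later values $A^{(n_i+j)}_{p,\beta_p}(f)$, $j\ge1$, which a priori are controlled only by $\sup_{m}A^{(m)}_{p,\beta_p}(f)$. Bounding that supremum by the liminf is precisely the content of Theorem \ref{thm_main}, so invoking such a bound here would be circular.

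This is not a cosmetic issue: the entire difficulty of the proposition lives in that step, and the paper's proof is organized around circumventing it. The paper splits the sum at a large index $J$. The head $\sum_{j=0}^{J}$ is bounded crudely by $\left(2^{\frac{p-1}{k}}a^{2\alpha}\right)^{J}A^{(n)}_{p,\beta_p}(f)$, a single term carrying the index $n$, whose $\varliminf_{n\to+\infty}$ is the desired right-hand side. Each tail term $A^{(n+j)}_{p,\beta_p}(f)$ with $j>J$ is bounded by $C_{\mathrm{P1}}C_{\mathrm{WM}}\varliminf_{n\to+\infty}\mytilde{\calE}_p^{G_n}(M_nf)$ via Proposition \ref{prop1} and the already established weak monotonicity of the graph energies (Lemma \ref{lem_WM}); since $\gamma<1$, choosing $J$ so large that the resulting coefficient is at most $1/2$ allows the tail to be absorbed into the left-hand side after taking $\varliminf_{n\to+\infty}$ (here the finiteness of $\varliminf_{n\to+\infty}\mytilde{\calE}_p^{G_n}(M_nf)$ for $f\in\calF_p$ is what legitimizes the absorption). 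If you replace your limit step by this absorption argument, keeping your correct computation of $\gamma$ and the role of the choice (\ref{eqn_k}) of $k$, the proof closes.
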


\begin{proof}
Let $J\ge1$ be an integer to be determined. For any $n\ge1$, by Proposition \ref{prop3}, we have
$$\mytilde{\calE}_p^{G_n}(M_nf)\le C_{\mathrm{P3}}a^{(\alpha+\beta_p)n}\left(\sum_{j=0}^J+\sum_{j=J+1}^\infty\right)\left(2^{\frac{p-1}{k}}a^{2\alpha}\right)^{j}A_{n+j}(f).$$
Recall that
$$A^{(n)}_{p,\beta_p}(f)=a^{(\alpha+\beta_p)n}\sum_{j=0}^\infty A_{n+j}(f)\ge a^{(\alpha+\beta_p)n}A_n(f).$$
Hence
\begin{align*}
&a^{(\alpha+\beta_p)n}\sum_{j=0}^J\left(2^{\frac{p-1}{k}}a^{2\alpha}\right)^{j}A_{n+j}(f)\le a^{(\alpha+\beta_p)n}\sum_{j=0}^J\left(2^{\frac{p-1}{k}}a^{2\alpha}\right)^{J}A_{n+j}(f)\\
&\le\left(2^{\frac{p-1}{k}}a^{2\alpha}\right)^{J}a^{(\alpha+\beta_p)n}\sum_{j=0}^\infty A_{n+j}(f)=\left(2^{\frac{p-1}{k}}a^{2\alpha}\right)^{J}A^{(n)}_{p,\beta_p}(f),
\end{align*}
and
\begin{align*}
&a^{(\alpha+\beta_p)n}\sum_{j=J+1}^\infty\left(2^{\frac{p-1}{k}}a^{2\alpha}\right)^{j}A_{n+j}(f)\\
&\le a^{(\alpha+\beta_p)n}\sum_{j=J+1}^\infty\left(2^{\frac{p-1}{k}}a^{2\alpha}\right)^{j}a^{-(\alpha+\beta_p)(n+j)}A^{(n+j)}_{p,\beta_p}(f)\\
&=\sum_{j=J+1}^\infty\left(2^{\frac{p-1}{k}}a^{-(\beta_p-\alpha)}\right)^{j}A^{(n+j)}_{p,\beta_p}(f).
\end{align*}
By the choice Equation (\ref{eqn_k}) of the integer $k\ge1$, we have $2^{\frac{p-1}{k}}a^{-(\beta_p-\alpha)}\in(0,1)$. By Lemma \ref{lem_WM} and Proposition \ref{prop1}, we have
$$A^{(n+j)}_{p,\beta_p}(f)\le C_{\mathrm{P1}}\sup_{l\ge n+j}\mytilde{\calE}_p^{G_l}(M_lf)\le C_{\mathrm{P1}}C_{\mathrm{WM}}\varliminf_{n\to+\infty}\mytilde{\calE}_p^{G_n}(M_nf).$$
Hence
\begin{align}
&\mytilde{\calE}_p^{G_n}(M_nf)\nonumber\\
&\le C_{\mathrm{P3}}\left(2^{\frac{p-1}{k}}a^{2\alpha}\right)^{J}A^{(n)}_{p,\beta_p}(f)+C_{\mathrm{P3}}\sum_{j=J+1}^\infty\left(2^{\frac{p-1}{k}}a^{-(\beta_p-\alpha)}\right)^{j}C_{\mathrm{P1}}C_{\mathrm{WM}}\varliminf_{n\to+\infty}\mytilde{\calE}_p^{G_n}(M_nf)\nonumber\\
&=C_{\mathrm{P3}}\left(2^{\frac{p-1}{k}}a^{2\alpha}\right)^{J}A^{(n)}_{p,\beta_p}(f)+\left(C_{\mathrm{P1}}C_{\mathrm{P3}}C_{\mathrm{WM}}\frac{\left(2^{\frac{p-1}{k}}a^{-(\beta_p-\alpha)}\right)^{J+1}}{1-2^{\frac{p-1}{k}}a^{-(\beta_p-\alpha)}}\right)\varliminf_{n\to+\infty}\mytilde{\calE}_p^{G_n}(M_nf)\label{eqn_EliminfE}.
\end{align}
Firstly, since $2^{\frac{p-1}{k}}a^{-(\beta_p-\alpha)}\in(0,1)$, choosing $J\ge1$ sufficiently large, we have
$$C_{\mathrm{P1}}C_{\mathrm{P3}}C_{\mathrm{WM}}\frac{\left(2^{\frac{p-1}{k}}a^{-(\beta_p-\alpha)}\right)^{J+1}}{1-2^{\frac{p-1}{k}}a^{-(\beta_p-\alpha)}}\le\frac{1}{2},$$
then
$$\mytilde{\calE}_p^{G_n}(M_nf)\le C_{\mathrm{P3}}\left(2^{\frac{p-1}{k}}a^{2\alpha}\right)^{J}A^{(n)}_{p,\beta_p}(f)+\frac{1}{2}\varliminf_{n\to+\infty}\mytilde{\calE}_p^{G_n}(M_nf).$$
Secondly, letting $n\to+\infty$, we have
$$\varliminf_{n\to+\infty}\mytilde{\calE}_p^{G_n}(M_nf)\le C_{\mathrm{P3}}\left(2^{\frac{p-1}{k}}a^{2\alpha}\right)^{J}\varliminf_{n\to+\infty}A^{(n)}_{p,\beta_p}(f)+\frac{1}{2}\varliminf_{n\to+\infty}\mytilde{\calE}_p^{G_n}(M_nf),$$
hence
$$\varliminf_{n\to+\infty}\mytilde{\calE}_p^{G_n}(M_nf)\le\left(2C_{\mathrm{P3}}\left(2^{\frac{p-1}{k}}a^{2\alpha}\right)^{J}\right)\varliminf_{n\to+\infty}A^{(n)}_{p,\beta_p}(f).$$
\end{proof}

\begin{myrmk}
By Equation (\ref{eqn_EliminfE}), we can indeed obtain that for any $\veps\in(0,1)$, there exists some positive constant $C(\veps)$ depending on $\veps$ such that
$$\mytilde{\calE}_p^{G_n}(M_nf)\le C(\veps)A^{(n)}_{p,\beta_p}(f)+\veps\varliminf_{n\to+\infty}\mytilde{\calE}_p^{G_n}(M_nf)$$
for any $n\ge1$, $f\in\calF_p$.
\end{myrmk}

Now we give the proof of Theorem \ref{thm_main} as follows.

\begin{proof}[Proof of Theorem \ref{thm_main}]
For any $n\ge1$, we have
\begin{align*}
&A^{(n)}_{p,\beta_p}(f)\le C_{\mathrm{P1}}\sup_{l\ge n}\mytilde{\calE}_{p}^{G_{l}}(M_lf)\\
&\le C_{\mathrm{P1}}C_{\mathrm{WM}}\varliminf_{n\to+\infty}\mytilde{\calE}_{p}^{G_{n}}(M_nf)\\
&\le C_{\mathrm{P1}}C_{\mathrm{WM}}C_{\mathrm{P4}}\varliminf_{n\to+\infty}A^{(n)}_{p,\beta_p}(f),
\end{align*}
where we use Proposition \ref{prop1} in the first inequality, the weak monotonicity result Lemma \ref{lem_WM} in the second inequality and Proposition \ref{prop4} in the third inequality. Taking supremum with respect to $n\ge1$, we have
$$\sup_{n\ge1}A^{(n)}_{p,\beta_p}(f)\le\left(C_{\mathrm{P1}}C_{\mathrm{P4}}C_{\mathrm{WM}}\right)\varliminf_{n\to+\infty}A^{(n)}_{p,\beta_p}(f).$$
\end{proof}

\bibliographystyle{plain}

\end{document}